\documentclass{article}
\usepackage{graphicx}
\usepackage{latexsym,multirow,color,epsfig}
\usepackage{amsmath, amsthm, amscd, amsfonts, amssymb, mathrsfs}
\usepackage{cite,calc,xcolor,mathrsfs,dsfont}
\usepackage[subnum]{cases}
\setlength{\textwidth}{6.5in} \setlength{\textheight}{8.5in}
\setlength{\evensidemargin}{-.2in}
\setlength{\oddsidemargin}{-0.16in}
\newtheorem{thm}{Theorem}

\newtheorem{lem}[thm]{Lemma}

\theoremstyle{definition}

\newtheorem{exm}{Example}
\newtheorem{rem}[thm]{Remark}

\begin{document}
\title{An accurate regularization technique for the backward heat conduction problem with time-dependent thermal diffusivity factor}
\author{Milad Karimi, Fridoun Moradlou{\footnote{{Corresponding
author. E-mail: \texttt{moradlou@sut.ac.ir} (F. Moradlou) Tel.: +98-41-33459049, ~ Fax: +98-41-33444300 .
}}}, Mojtaba Hajipour\\
\begin{small} {Department of Mathematics, Sahand University of Technology, P.O.~Box:~51335-1996, Tabriz, Iran
}\end{small}
}
\date{}
\maketitle%
\hrule
\begin{abstract}
In this work, an accurate regularization technique based on the Meyer
wavelet method is developed to solve the ill-posed backward heat
conduction problem  with time-dependent thermal diffusivity factor
in an infinite ``strip''.  In principle, the
extremely ill-posedness of the considered problem is caused by the
amplified infinitely growth in the frequency components which lead
to a blow-up in the representation of the solution. Using the Meyer
wavelet technique, some new stable estimates are proposed in the
H\"{o}lder and Logarithmic types which are optimal in the sense of
given by Tautenhahn.  The stability and convergence rate of the
proposed regularization technique are proved. The good performance and the high-accuracy of this technique  is demonstrated through various one and two dimensional
examples. Numerical simulations and some comparative results are
presented.

\end{abstract}
\begin{small} {\textbf{Keywords:}} Backward heat conduction; Meyer wavelet; Ill-posed problem; Multi-resolution analysis.
\end{small}\\
\hrule
\section{Introduction}
Consider the backward heat conduction problem (BHCP) in an infinite ``strip'' domain as follows
  \begin{numcases}{\label{eq:1}}
\label{e2-0}
\displaystyle{\partial_{t}u(\mathbf{x},t)-\kappa(t)\nabla^{2}u(\mathbf{x},t)}=0,  \quad (\mathbf{x},t)\in\Bbb R^{n}\times [0,T),\\\label{e2-1}
u(\mathbf{x},T)=\varphi_{T}(\mathbf{x}),  \qquad \mathbf{x}\in\Bbb R^{n},
\end{numcases}
where $\mathbf{x}=(x_{1},\cdots,x_{n})$,
${\nabla^{2}=\sum_{i=1}^{n}\partial_{x_{i}}^{2}}$ is an
$n$-dimensional Laplace operator, $\kappa(t)\in
\mathscr{C}([0,T])$ is a positive
time-dependent thermal diffusivity factor  and   \eqref{e2-1} describes a final boundary value condition.
The BHCP given by \eqref{eq:1} is considered as an inverse problem
in mathematical physics \cite{Isakov}. This problem is well-known to be extremely
ill-posed in the sense of Hadamard, i.e., solution does not always
exist, and when  the solution exists, it do not depend continuously
on the scattered data in any reasonable topology \cite{Hadamard}.
This model of the problem appears in many practical areas, such as
mathematical finance,   mechanics of continuous
media, image processing and  heat propagation in thermophysics
\cite{Beck,Bear,Carasso}. The  nonhomogeneous type of equation
\eqref{e2-0} is also considered as an advection-convection equation
appeared in many pollution problems, particularly in groundwater
pollution source identification problems \cite{Atmadja}. Due to the
ill-posedness nature of the BHCP, most classical approximation
techniques  are not successful to find  an acceptable approximate
solution. To overcome this difficulty, some special regularization
techniques are required.

In the past two decades, various techniques have been developed to the  special cases of the BHCP  given by \eqref{eq:1}. For  $\kappa(t)=1$ and $\eta(x,t)=0$, the one-dimensional BHCP has been studied by some researcher e.g,  John introduced in \cite{John} a  bound on the solution at $ t = T$ with relaxation on the initial datum $\varphi_{T}$,  Latt\'{e}s and Lions \cite{Lattes}, Showalter \cite{Showalter}, Ames \cite{Ames1}, Miller \cite{Miller} used quasi-reversibility methods to approximate the  BHCP. Moreover, the least squares schemes with Tikhonov regularization were proposed in \cite{Ames2,Knops2,Miller}. An optimal error estimate and uniqueness conditions for the  one-dimensional BHCP with $\kappa(t)=1$ and $\eta(x,t)=0$ have been studied in \cite{Tautenhahn1} and \cite{Miranker}, respectively.
 In \cite{Nama,Chu}, an  approximate solution for the BHCP have been presented  by using the Fourier truncated methods.  Many numerical schemes have been also developed to solve the  BHCP including Tikhonov regularization \cite{Liu}, fundamental solution \cite{Li}, meshless \cite{Xiong}, central difference and quasi-reversibility \cite{Hon},  parallel \cite{Lee1}, quasi-reversibility \cite{Ternat}, boundary element \cite{Han}, operator marching \cite{Chen}, convolution regularization \cite{Shi} and mollification \cite{Qian} methods. The nonhomogeneous case of the BHCP
 has been considered by Trong et al \cite{Trong1,Trong2}.  By using a truncation
regularization method, the one dimensional case of the BHCP with the time-dependent diffusion coefficient has been  formulated in \cite{Triet1,Triet2,Zhang}. A modified quasi-reversibility method for the $n$-dimensional BHCP has been also developed  in \cite{Hapuarachchi}. The most error estimates for the BHCP  presented in the literature are of the H\"{o}lder type which is not more suitable to measure with adequate accuracy.
Wavelets theory  as a new  relatively  tools is applied in  engineering and mathematical sciences \cite{Debnath}. The basis wavelets authorises us to attack problems not accessible with conventional approximate techniques \cite{kolaczyk}. This basis  can be modified in a systematic way and  can be applied in different regions of space with different resolutions. Therefore,  wavelet methods have been introduced for solving the inverse and ill-posed parabolic partial differential equations (PDEs) \cite{Reginska,Linhares,Elden2,Karimi1}. Recently a wavelet regularization method was proposed by the authors for solving  the  Helmholtz equation \cite{Karimi1}.

The main inspiration of this paper is to introduce an efficient Meyer wavelet regularization technique to solve the high-dimensional BHCP given by \eqref{eq:1} with a positive time-dependent thermal diffusivity factor. This technique provides a regularization parameter for an appropriate multi-resolution scale space to get an optimal error estimate in the  sense of given by \cite{Tautenhahn2}. The convergence rate of this error estimate represents in the  H\"{o}lder and Logarithmic types. The main features of the new regularization technique are summarized as follows:
 \begin{itemize}
   \item The presented regularization method provides an  optimal  error estimate in the Logarithmic type which is  more suitable to measure with high accuracy.
   \item This technique retrieves  the solution of BHCP with smooth and non-smooth final data, satisfactory.
   \item The proposed technique is successful to solve the high-dimensional BHCP, accurately.
   \end{itemize}
The outline of the rest of the paper is structured as follows. The ill-posedness of the BHCP is studied by Section 2.  The Meyer wavelets and  their properties  for solving the ill-posed BHCP is described in Section 3. Section \ref{Section4} provides some sharp error estimates between the approximate and exact solutions as well as the choice of the regularization parameter. Finally,  the efficiency and the accuracy of the proposed technique  are confirmed by solving some numerical examples in Section \ref{Section5}.
\section{ The ill-posedness behavior of the problem}
Here, we study the ill-posedness behavior of the BHCP. The Schwartz space and its dual  are denoted by $\mathcal{S}(\Bbb R^{n})$ and $\mathcal{S}'(\Bbb R^{n})$,
respectively. The Fourier transform of a function $g\in\mathcal{S}(\Bbb R^{n})$ is described by
\begin{align}
\hat{g}(\boldsymbol{\omega}):=\frac{1}{(\sqrt{2\pi})^{n}}\int_{\Bbb R^{n}}g(\mathbf{x})e^{-i\boldsymbol{\omega}\cdot \mathbf{x}}\,d\mathbf{x},
\end{align}\label{eq:2}
where $\boldsymbol{\omega}=(\omega_{1},\cdots,\omega_{n})$. While for a tempered distribution $f\in\mathcal{S}'(\Bbb R^{n})$, the Fourier transform is described by
\begin{align*}
\langle\hat{f},g\rangle =\langle f,\hat{g}\rangle,\qquad \forall g\in\mathcal{S}'(\Bbb R^{n}),
\end{align*}
where $\langle\cdot,\cdot\rangle$ denote the inner product.
For $p\geq 0$, ${\mathbf{H}}^{p}(\Bbb R^{n})$ is signified the Sobolev space of all tempered distributions $f\in\mathcal{S}'(\Bbb R^{n})$ with the following norm
\begin{align}\label{eq:3}
\Vert f\Vert_{{\mathbf{H}}^{p}}:=\Bigg(\int_{\Bbb R^{n}}\vert\hat{f}(\boldsymbol{\omega})\vert^{2}(1+\Vert\boldsymbol{\omega}\Vert^{2})^{p}
\,d\boldsymbol{\omega}\Bigg)^{\frac{1}{2}},
\end{align}
where $\Vert\boldsymbol{\cdot}\Vert$ describes  the Euclidian norm.
It is easy to see that ${\mathbf{H}}^{0}(\Bbb R^{n})={\mathscr{L}}^2(\Bbb R^{n})$, and
${\mathscr{L}}^2(\Bbb R^{n})\subset {\mathbf{H}}^{p}(\Bbb R^{n})$ for $p\leq 0$.
 Suppose that the function $ u(\cdot,t) $  satisfies in the problem  given by
\eqref{eq:1} in the classical sense when $u(\cdot,t)\in {\mathscr{L}}^2(\Bbb
R^{n})$ for $0\leq t<T$. If $u(\cdot,t)\in {\mathscr{L}}^2(\Bbb R^{n})$
satisfies in the problem \eqref{eq:1}, then
\begin{align}\label{eq:3}
\left\{
\begin{array}{ll}
\displaystyle{\partial_{t}\hat{u}(\boldsymbol{\omega},t)+\kappa(t)\boldsymbol{\omega}^{2}\hat{u}(\boldsymbol{\omega}, t)}=0, & \quad (\boldsymbol{\omega},t)\in\Bbb R^{n}\times [0,T),\\
\hat{u}(\boldsymbol{\omega}, T)=\widehat{\varphi}_{T}(\boldsymbol{\omega}), & \boldsymbol{\omega}\in\Bbb R^{n},
\end{array} \right.
\end{align}
where $\hat{u}(\boldsymbol{\omega},t)$ is the Fourier transform of $u(\cdot,t)\in {\mathscr{L}}^2(\Bbb R^{n})$.
Using a simple calculation, the solution of the problem \eqref{eq:3} derives in the following form
\begin{align}\label{eq:4}
\hat{u}(\boldsymbol{\omega},t)&=\widehat{\varphi}_{T}(\boldsymbol{\omega})e^{\boldsymbol{\omega}^{2}\mu_{T}(t)},
\end{align}
where $\displaystyle{\mu_{T}(t):=\int_{t}^{T}\kappa(\nu)\,d\nu>0}$.
From the  term in the right-hand side \eqref{eq:4}, the factor
$\displaystyle{e^{\boldsymbol{\omega}^{2}\mu_{T}(t)}}$ increase
rapidly as $\Vert\boldsymbol{\omega}\Vert\to\infty$ and $t<T$. Thus
the term $\displaystyle{e^{\boldsymbol{\omega}^{2}\mu_{T}(t)}}$ is
the source of instability. So, the existence of a solution in
${\mathscr{L}}^2(\Bbb R^{n})$ depends on a rapid decay of
$\widehat{\varphi}_{T}(\boldsymbol{\omega})$ at high frequencies.
 But in practice, the final data at $t=T$, is
denoted by $\varphi_{T,m}(\mathbf{x})$, which often obtained on the
basis of measuring of physical system. Moreover,  the data
$\varphi_{T,m}(\mathbf{x})$ are not accessible with absolute
accuracy and does not possess such a rapid decay
property in general. Hence, decay of this exact data is not likely to occur in
the $\widehat{\varphi}_{T,m}(\boldsymbol{\omega})$. As a measured
data $\varphi_{T,m}(\mathbf{x})$, the Fourier transform
$\widehat{\varphi}_{T,m}(\boldsymbol{\omega})$ is merely belong to
${\mathscr{L}}^2(\Bbb R^{n})$. For each $t$, $0<t\leq T$, a
dramatically large error in calculation of the solution
$u(\mathbf{x},t)$  will be probably happen for a small perturbation
in data $\varphi_{T}(\mathbf{x})$. Therefore, these perturbation of
high frequencies lead to the ill-posedness of the problem
\eqref{eq:1}.

  We will show that how the BHCP suffers from nonexistence and instability of the solution. For that mean, suppose that the function $\varphi_{T}(\cdot)$ is exact data and $\varphi_{T,m}(\cdot)$ is measured data, corresponding exact data $\varphi_{T}(\cdot)$.
We set $ \displaystyle{\varphi_{T,m}(\mathbf{x}):=\varphi_{T}(\mathbf{x})+\frac{\sin(m\Vert \mathbf{x}\Vert)}{m^{2}}}$. For $0\leq t<T$, the data error is defined as
 \begin{align*}
\Vert \varphi_{T,m}-\varphi_{T}\Vert_{\infty}&=\sup_{\mathbf{x}\in\Bbb R^{n}}\vert\varphi_{T,m}(\mathbf{x})-\varphi_{T}(\mathbf{x})\vert
 =\sup_{\mathbf{x}\in\Bbb R^{n}}\Big\vert\frac{\sin(m\Vert \mathbf{x}\Vert)}{m^{2}}\Big\vert
 \leq\frac{1}{m^2}.
 \end{align*}
 For $\varphi_{T,m}$, the solution of problem \eqref{eq:1}, is expressed as follows
\begin{align*}
u_{m}(\mathbf{x},t)=\frac{\sin(m\Vert \mathbf{x}\Vert)e^{m^{2}\mu_{T}(t)}}{m^2}+u(\mathbf{x},t),
\end{align*}
hence
\begin{align*}
\Vert u_{m}(\cdot,t)-u(\cdot, t)\Vert_{\infty}&=\sup_{\substack{t\in [0,T)\\\mathbf{x}\in\Bbb R^{n}}}\vert u_{m}(\mathbf{x},t)-u(\mathbf{x},t)\vert\\
&\leq\sup_{\substack{t\in [0,T)\\\mathbf{x}\in\Bbb R^{n}}}\Bigg\vert\frac{\sin(m\Vert \mathbf{x}\Vert)e^{m^{2}\mu_{T}(t)}}{m^2}\Bigg\vert
\leq\sup_{\substack{t\in [0,T)\\ \mathbf{x}\in\Bbb R^{n}}}\Bigg\vert\frac{e^{m^{2}\mu_{T}(t)}}{m^2}\Bigg\vert
\leq\frac{e^{m^{2}\mu_{T}(0)}}{m^2}.
\end{align*}
Therefore, we can derive
\begin{align*}
\lim_{m\to\infty}\Vert\varphi_{T,m}-\varphi_{T}\Vert_{\infty}\leq\lim_{m\to\infty}\frac{1}{m^2}=0,
\end{align*}
and
\begin{align*}
\lim_{m\to\infty}\Vert u_{m}(\mathbf{x},\cdot)-u(\mathbf{x},\cdot)\Vert_{\infty}\leq\lim_{m\to\infty}\frac{e^{m^{2}\mu_{T}(0)}}{m^2}=\infty.
\end{align*}
Consequently, the problem defined by \eqref{eq:1} is extremely ill-posed and its approximate simulation is  complicated. This ill-posedness is caused by the disturbation of high frequencies. In Figure \ref{fig:imag1} (a), we give the exact solution at $t=0$,
that is, $u(x,y,0)$, and the reconstructed solution
$u^{\delta}(x,y,0)$ from the noisy data $\varphi_{T,m}(x,y)$ without
regularization. This figure shows that $u^{\delta}$ does not approximate the
solution. Thus, some regularization procedure is necessary.  From a
computational analysis point of view the Figure \ref{fig:imag1} (b)
shows that the problem is extremely ill-posed. Hence, it is desirable
to design an efficient  strategy for solving the BHCP.
\begin{figure}[bt!]
\begin{center}
 $ \begin{array}{cc}
   \includegraphics[width=2.8in] {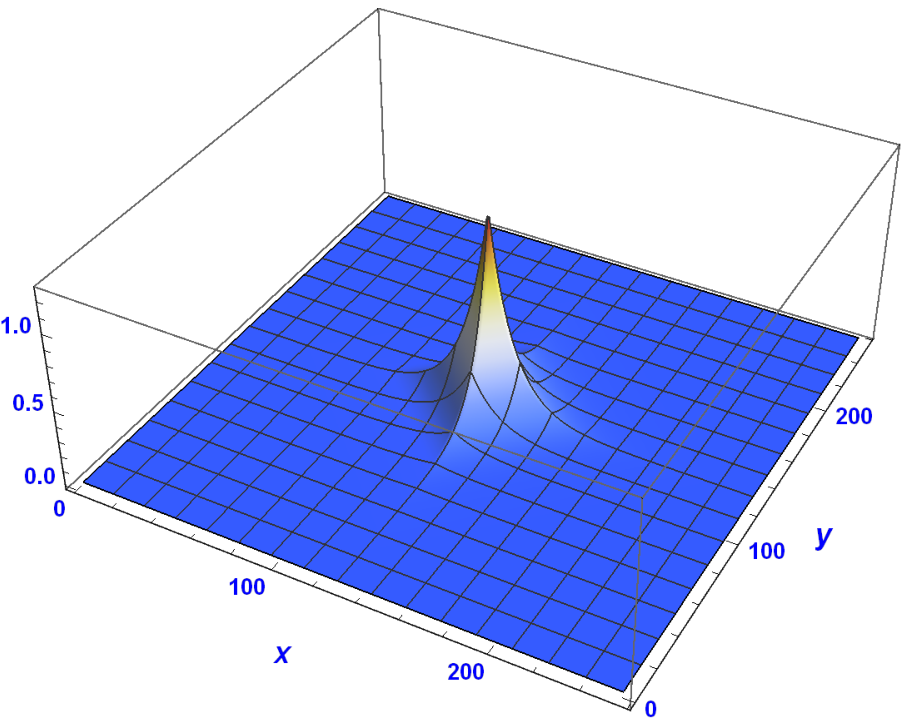} & \includegraphics[width=2.8in] {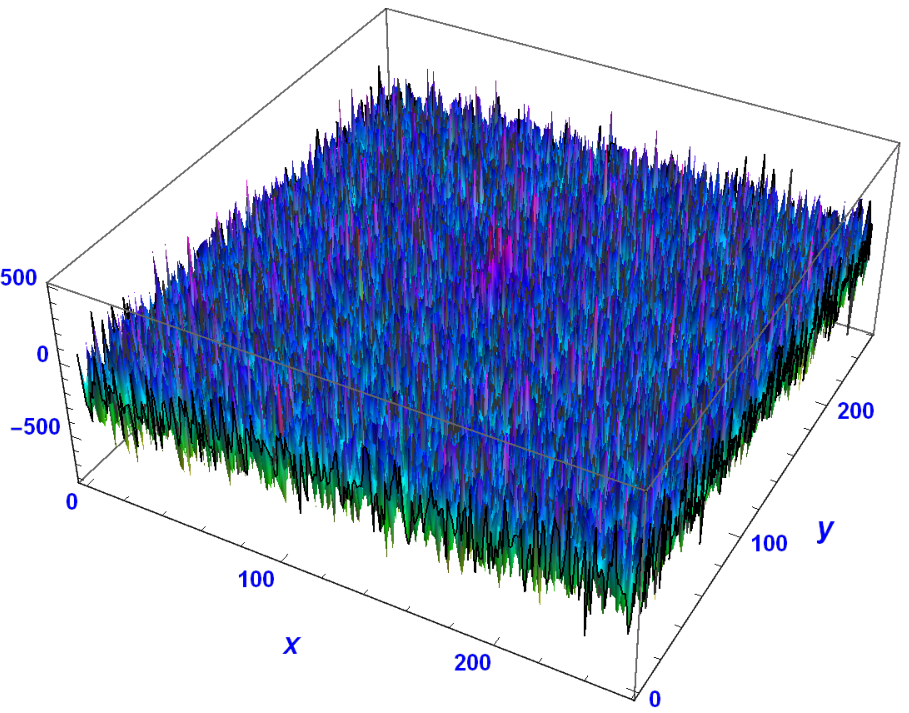}\\
  {\scriptsize \textrm{(\textit{a})~ Exact solution}} & {\scriptsize \textrm{(\textit{b})~ Unregularized solution}}\\
  \end{array} $
  \end{center}
\vspace{-.5cm} {\caption{\label{fig:imag1}\small{(a) The  exact solution, and (b) the unregularized solution reconstructed from  $\varphi_{1,m}$ at $t=0$.}}}
\end{figure}
Using the wavelets theory,  some regularization techniques are developed to overcome this type of difficulty.
Unlike most other wavelets, the Meyer wavelets are special. In fact, the
most important property of the Meyer wavelets is that they are compact
support in the frequency domain but in the time domain there is no
such property. Using correct choice of regularization parameter and
applying the Meyer wavelet, we can formulate a regularized solution
of problem \eqref{eq:1} in Section 4. Therefore, this
problem  will become well-posed.  However, we will
discuss the Meyer wavelets in the next section in details.
\section{Meyer Wavelets}
Throughout of the paper, the $n$-dimensional Meyer's orthonormal
scaling function denoted  by $\Phi$.  The one-dimensional Meyer
wavelet and scaling functions are respectively denoted by $\psi(x)$ and $\phi(x)$.
 These functions satisfy the following properties \cite{Daubechies}:
\begin{align}\label{eq:m1}
\textrm{supp}\ \hat{\phi}&=\Big[-\frac{4\pi}{3},\frac{4\pi}{3}\Big],\\
\textrm{supp} \ \hat{\psi}&=\Big[-\frac{8\pi}{3},-\frac{2\pi}{3}\Big]\cup\Big[\frac{2\pi}{3},\frac{8\pi}{3}\Big].
\end{align}
It can be proved  that the set of functions
\begin{align}\label{eq:m2}
\psi_{j,k}(x):=2^{j/2}\psi(2^{j}x-k), \qquad j,k\in\Bbb Z
\end{align}
is an orthonormal basis of ${\mathscr{L}}^2(\Bbb R)$ \cite{Daubechies}. Consequently, the multi-resolution analysis (MRA) for the Meyer wavelet in ${\mathscr{L}}^2(\Bbb R)$ is the family of all closed subspaces $\lbrace V_{j}\rbrace_{j\in\Bbb Z}$ which is produced by
\begin{align}\label{eq:m3}
V_{j}=\overline{\textrm{span}\lbrace\phi_{j,k}:k\in\Bbb Z\rbrace}, \qquad \phi_{j,k}=2^{j/2}\phi(2^{j}t-k),\qquad j,k\in\Bbb Z,
\end{align}
and
\begin{align}\label{eq:m4}
\textrm{supp}\ \hat{\phi}_{j,k}=\Big[-\frac{4\pi}{3}2^{j},\frac{4\pi}{3}2^{j}\Big],\qquad k\in\Bbb Z.
\end{align}
Using tensor products of the spaces $V_{j}$, we can generate an $n$-dimensional MRA \cite{Meyer}. Therefore, the function $\Phi$ given by
 \begin{align}\label{eq:m5}
 \Phi(\mathbf{x})=\prod_{k=1}^{n}\phi(x_{k}),\qquad \mathbf{x}\in\Bbb R^{n},
 \end{align}
defines a $n$-dimensional scaling function. Moreover, the general
form of the basis function $\Psi$ in the wavelet space $W_{J}$ is
\begin{align}\label{eq:m6}
\Psi(\boldsymbol{\mathbf{x}})=2^{nJ/2}\psi(2^{J}x_{i}-k_{i})\cdot\prod_{m\neq i}\Theta_{m}(2^{J}x_{m}-k_{m}),\qquad \mathbf{x}\in\Bbb R^{n},
\end{align}
where $k_{i}\in\Bbb Z$. Note that the functions
$\phi$ or $\psi$ are corresponding to any $\Theta_{m}$, $m\in\lbrace
1,\cdots,n\rbrace$. Consequently, from \eqref{eq:m1} we get that
\begin{align}\label{eq:m7}
\textrm{supp} \hat{\Phi}=\Big[-\frac{4\pi}{3},\frac{4\pi}{3}\Big]^{n},
\end{align}
and
\begin{align}\label{eq10-1}
\hat{f}(\boldsymbol{\omega})=0\quad \textrm{for}\quad \Vert\boldsymbol{\omega}\Vert_{\infty}\leq\frac{2}{3}\pi 2^{J},\quad f\in W_{J},\quad J\in\Bbb N.
\end{align}
 The orthogonal projection operators of ${\mathscr{L}}^2(\Bbb R^{n})$ onto spaces $V_{J}$ and $W_{J}$ is denoted by the following equations, respectively
\begin{align}\label{eq:m8}
\mathcal{P}_{J}f:=\sum_{k\in\Bbb Z^{n}}\langle f,\Phi_{J,k}\rangle\Phi_{J,k},\qquad f\in {\mathscr{L}}^2(\Bbb R^{n}),
\end{align}
and
\begin{align}\label{eq:m9}
\mathcal{Q}_{J}f:=\sum_{k\in\Bbb Z^{n}}\langle f,\Psi_{J,k}\rangle\Psi_{J,k},\qquad f\in {\mathscr{L}}^2(\Bbb R^{n}),
\end{align}
where $\langle\cdot,\cdot\rangle$ denotes ${\mathscr{L}}^2$-inner product.
Note that the connection between the wavelet and scale spaces simply
defined by the following relation
\begin{align*}
V_{J+1}=V_{J}\oplus W_{J}
\end{align*}
where the wavelet space $W_{J}$ is considered to as the orthogonal complement of $V_{J}$ in $V_{J+1}$. Let
\begin{align}\label{eq:m10}
\Lambda_{J}:=2^{J}\Big[-\frac{2}{3}\pi,\frac{2}{3}\pi\Big]^{n}.
\end{align}
Using \eqref{eq10-1}, for $J\in\Bbb N$, we have
\begin{align}\label{eq:m11}
\widehat{\mathcal{P}_{J}f}(\boldsymbol{\omega})=0,\qquad
\textrm{for}\quad \boldsymbol{\omega}\in\Gamma_{J+1},
\end{align}
where $\Gamma_{J}:=\Bbb R^{n}\setminus \Lambda_{J}$, and
\begin{align}\label{eq:m12}
\widehat{(I-\mathcal{P}_{J})f}(\boldsymbol{\omega})=\widehat{\mathcal{Q}_{J}f}(\boldsymbol{\omega}),\qquad \text{for}\quad \boldsymbol{\omega}\in\Lambda_{J+1}.
\end{align}
Let $\chi_{J}$ be the characteristic function of the cube $\Lambda_{J}$ and define the operator $M_{J}$ by the following equation
\begin{align}\label{eq:m13}
\widehat{M_{J}f}:=(1-\chi_{J})\hat{f},\quad J\in\Bbb N .
\end{align}
From Eq.  \eqref{eq:m6}, for $j\geq J$, each basis function $\Psi$  in $W_j$ satisfies
\begin{align}\label{eq:m14}
\hat{\Psi}(\boldsymbol{\omega})=0,\quad \boldsymbol{\omega}\in\Lambda_{J},
\end{align}
and so we get
\begin{eqnarray}\label{eq:m15}
\langle f,\Psi\rangle=\langle\hat{f},\hat{\Psi}\rangle=\langle(1-\chi_{J}\hat{f},\hat{\Psi})=\langle M_{J},\Psi\rangle ,\\\label{eq:m16}
\mathcal{Q}_{J}=\mathcal{Q}_{J}M_{J},\\
I-\mathcal{P}_{J}=(I-\mathcal{P}_{J})M_{J}.
\end{eqnarray}
Many Bernestien-type inequalities are hold in \cite{Hao} for the partial differential operators $\partial_{\mathbf{x}}^{r}$ where $\displaystyle{\partial_{\mathbf{x}}^{k}:=\frac{\partial^{r}}{\partial \mathbf{x}^{r}}}$.
\begin{thm}
Suppose that $\lbrace V_{j}\rbrace_{j\in\Bbb Z}$ is the Meyer's MRA. Then for $J\in\Bbb N$, $q\in\Bbb R$ and all $\varphi\in V_{J},$ we have
\begin{align}\label{eq:a17}
\Vert \partial_{x_{i}}^{r}\varphi\Vert_{{\mathbf{H}}^{q}}\leq C2^{(J-1)r}\Vert \varphi\Vert_{{\mathbf{H}}^{q}},\qquad i=1,\cdots ,n, \quad r\in\Bbb N,
\end{align}
where $C$ is the positive constant.
\end{thm}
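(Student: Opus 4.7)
The plan is to prove this Bernstein-type inequality in the standard way: move everything to the Fourier side, exploit the compact support of $\hat{\varphi}$ (which is the defining feature that distinguishes $V_J$), and bound the symbol $\omega_i^r$ pointwise on that support.

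First I would use the characterization of the Sobolev norm given in \eqref{eq:3}, writing
\begin{equation*}
\|\partial_{x_i}^r \varphi\|_{\mathbf{H}^q}^2 \;=\; \int_{\mathbb{R}^n} \bigl|\widehat{\partial_{x_i}^r \varphi}(\boldsymbol{\omega})\bigr|^2 \bigl(1+\|\boldsymbol{\omega}\|^2\bigr)^q \, d\boldsymbol{\omega}.
\end{equation*}
The standard identity $\widehat{\partial_{x_i}^r\varphi}(\boldsymbol{\omega}) = (i\omega_i)^r \hat{\varphi}(\boldsymbol{\omega})$ then converts the derivative into pointwise multiplication by $\omega_i^r$.

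Next I would invoke the support property. Because $\varphi\in V_J$ and $\Phi$ is a tensor product of one-dimensional Meyer scaling functions satisfying \eqref{eq:m1}, the dilation rule \eqref{eq:m4} together with \eqref{eq:m7} forces
\begin{equation*}
\operatorname{supp} \hat{\varphi} \;\subseteq\; \Bigl[-\tfrac{4\pi}{3} 2^{J},\, \tfrac{4\pi}{3} 2^{J}\Bigr]^{n}.
\end{equation*}
In particular $|\omega_i|\le \tfrac{4\pi}{3} 2^{J} = \tfrac{8\pi}{3}\,2^{J-1}$ on the support, so $|\omega_i|^{r} \le \bigl(\tfrac{8\pi}{3}\bigr)^{r} 2^{(J-1)r}$.

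Setting $C := \bigl(\tfrac{8\pi}{3}\bigr)^{r}$ and combining the two displays gives
\begin{equation*}
\|\partial_{x_i}^r\varphi\|_{\mathbf{H}^q}^2 \;\le\; C^{2}\, 2^{2(J-1)r} \int_{\mathbb{R}^n} |\hat{\varphi}(\boldsymbol{\omega})|^2 (1+\|\boldsymbol{\omega}\|^2)^q\, d\boldsymbol{\omega} \;=\; C^{2}\, 2^{2(J-1)r}\, \|\varphi\|_{\mathbf{H}^q}^{2},
\end{equation*}
and taking square roots yields the claimed inequality.

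There is no real obstacle here; the only mild point of care is to verify the support inclusion for the $n$-dimensional scaling function, which follows from \eqref{eq:m5} and the tensor-product structure, and to make the bookkeeping $2^{Jr} = 2^{r}\cdot 2^{(J-1)r}$ so that the factor $2^r$ is absorbed into the constant $C$.
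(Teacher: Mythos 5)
Your proof is correct, and it takes a genuinely different route from the paper's. The paper does not argue on the Fourier side at all: it quotes from \cite{Hao,Meyer} the Bernstein-type inequalities $\Vert\partial_{x_i}^r\varphi\Vert_{\mathbf{H}^q}\le C_0 2^{jr}\Vert\varphi\Vert_{\mathbf{H}^q}$ for $\varphi\in W_j$ and $\Vert\partial_{x_i}^r\varphi\Vert_{\mathbf{H}^q}\le C_1\Vert\varphi\Vert_{\mathbf{H}^q}$ for $\varphi\in V_0$, decomposes $\mathcal{P}_J=\mathcal{P}_0+\sum_{j=0}^{J-1}\mathcal{Q}_j$, and sums the contributions, ending up with a constant of the form $C_0\Vert\mathcal{P}_0\Vert+C_1\sum_{j=0}^{J-1}\Vert\mathcal{Q}_j\Vert$. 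Your direct frequency-support argument is cleaner and arguably stronger: it avoids importing the cited inequalities as black boxes, it sidesteps the fact that the paper's constant as written actually accumulates a sum of $J$ terms (and hence is not manifestly independent of $J$, contrary to what the statement suggests), and it produces the fully explicit constant $(8\pi/3)^r$. The one thing your computation makes visible that the paper glosses over is that this constant necessarily depends on $r$: since $|\omega_i|$ can be as large as $\tfrac{8\pi}{3}2^{J-1}$ on $\operatorname{supp}\hat{\varphi}$ and $\tfrac{8\pi}{3}>1$, no $r$-independent constant can work in \eqref{eq:a17}. That is consistent with the literal statement of the theorem, but it matters downstream, because Lemma \ref{lem1} sums the estimate over $r$ against $\mu_T(t)^r/r!$ treating $C$ as a single fixed constant; with your explicit $C=(8\pi/3)^r$ the series still converges, but the exponent in Lemma \ref{lem1} would acquire an extra factor $(8\pi/3)^2$. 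This is a defect of the paper's bookkeeping, not of your argument.
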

\begin{proof}
 From \cite{Hao,Meyer}, the following inequalities are derived
\begin{align}\label{eq:a'17}
\Vert\partial_{x_{i}}^{r}\varphi\Vert_{{\mathbf{H}}^{q}}&\leq C_02^{jr}\Vert \varphi\Vert_{{\mathbf{H}}^{q}},
\quad r\in\Bbb N,\quad \varphi\in W_{j},\quad j\geq 0,\quad i=1,\cdots ,n,\\
\Vert\partial_{x_{i}}^{r}\varphi\Vert_{{\mathbf{H}}^{q}}&\leq C_1\Vert \varphi\Vert_{{\mathbf{H}}^{q}}, \qquad r\in\Bbb N,\quad \varphi\in V_{0},\quad i=1,\cdots ,n.\notag
\end{align}
Because of $\mathcal{P}_{J}=\mathcal{P}_{J-1}+\mathcal{Q}_{J-1}$, $J\in\Bbb N$,
for $\varphi\in V_{J}$; $J\geq 0$, we have
\begin{align}
\Vert \partial_{x_{i}}^{r}\mathcal{P}_{J}\varphi\Vert_{{\mathbf{H}}^{q}}&=\Vert \partial_{x_{i}}^{r}\mathcal{P}_{0}\varphi\Vert_{{\mathbf{H}}^{q}}+\Big\Vert \sum_{j=0}^{J-1}\partial_{x_{i}}^{r}\mathcal{Q}_{j}\varphi\Big\Vert_{{\mathbf{H}}^{q}}\notag\\
&\leq C_{0}\Vert \mathcal{P}_{0}\Vert_{{\mathbf{H}}^{q}}\Vert \varphi\Vert_{{\mathbf{H}}^{q}}2^{Jr}+ C_{1}\sum_{j=0}^{J-1}2^{jr}\Vert \mathcal{Q}_{j}\Vert_{{\mathbf{H}}^{q}}\Vert \varphi \Vert_{{\mathbf{H}}^{q}}\notag\\
&\leq C_{0}\Vert \mathcal{P}_{0}\Vert_{{\mathbf{H}}^{q}}\Vert \varphi\Vert_{{\mathbf{H}}^{q}}2^{Jr}+ C_{1}\sum_{j=0}^{J-1}2^{Jr}\Vert \mathcal{Q}_{j}\Vert_{{\mathbf{H}}^{q}}\Vert \varphi \Vert_{{\mathbf{H}}^{q}}\notag\\
&=\Big(C_{0}\Vert \mathcal{P}_{0}\Vert_{{\mathbf{H}}^{q}}+ C_{1}\sum_{j=0}^{J-1}\Vert \mathcal{Q}_{j}\Vert_{{\mathbf{H}}^{q}}\Big)2^{Jr}\Vert \varphi \Vert_{{\mathbf{H}}^{q}}\notag\\
&=C 2^{Jr}\Vert \varphi\Vert_{{\mathbf{H}}^{q}}\label{eq:a18}
\end{align}
where $\displaystyle{C:=C_{0}\Vert \mathcal{P}_{0}\Vert_{{\mathbf{H}}^{q}}+ C_{1}\sum_{j=0}^{J-1}\Vert \mathcal{Q}_{j}\Vert_{{\mathbf{H}}^{q}}}$.
It follows from \eqref{eq:a'17} and \eqref{eq:a18}, that
\begin{align*}
\Vert \partial_{x_{i}}^{r}\varphi\Vert_{{\mathbf{H}}^{q}}&=\Vert \partial_{x_{i}}^{r}\mathcal{P}_{J}\varphi\Vert_{{\mathbf{H}}^{q}}\\
&\leq\Vert \partial_{x_{i}}^{r}\mathcal{P}_{J-1}\varphi\Vert_{{\mathbf{H}}^{q}}+\Vert \partial_{x_{i}}^{r}\mathcal{Q}_{J-1}\varphi\Vert_{{\mathbf{H}}^{q}}\leq C2^{(J-1)r}\Vert \varphi\Vert_{{\mathbf{H}}^{q}}.
\end{align*}
The proof is complete.
\end{proof}
Define an operator $F_{t}:{\mathscr{L}}^2(\Bbb R^{n})\longrightarrow {\mathscr{L}}^2(\Bbb R^{n})$ by $(F_{t}\varphi_{T})(\mathbf{x}):=u(\mathbf{x},t)$. Then we have a following lemma.
\begin{lem}\label{lem1}
 Suppose that $\lbrace V_{j}\rbrace_{j\in\Bbb Z}$ is the Meyer's MRA, $J\in\Bbb N$, $q\in\Bbb R$ and $0\leq t<T$. Then for all $\varphi\in
 V_{J}$
\begin{align*}
\Vert F_{t}\varphi\Vert_{{\mathbf{H}}^{q}}\leq
C_{2}\exp\Big(2^{2J}\mu_{T}(t)\Big)\Vert\varphi\Vert_{{\mathbf{H}}^{q}},
\end{align*}
where $C_{2}$ is the positive constant.
\end{lem}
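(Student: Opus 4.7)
The plan is to exploit the explicit Fourier-side representation \eqref{eq:4} of $F_t$ together with the compact frequency support of elements of $V_J$. Because \eqref{eq:4} shows that $F_t$ acts as multiplication by $e^{\|\boldsymbol{\omega}\|^2\mu_T(t)}$ in the Fourier domain, the entire argument reduces to a uniform bound on this multiplier on the support of $\hat\varphi$. The one-dimensional relation \eqref{eq:m1}, combined with the dilation $\phi_{J,k}(x)=2^{J/2}\phi(2^Jx-k)$ in \eqref{eq:m3} and the tensor-product construction \eqref{eq:m5} of $\Phi$, forces $\hat\varphi$ to be supported inside the cube $\bigl[-\tfrac{4\pi}{3}2^J,\tfrac{4\pi}{3}2^J\bigr]^n$ for every $\varphi\in V_J$, so that $\|\boldsymbol{\omega}\|^2\leq c\,2^{2J}$ on that cube for a dimensional constant $c$.

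First I would apply Plancherel's theorem and the definition \eqref{eq:3} of the $\mathbf{H}^q$-norm to write
\begin{align*}
\|F_t\varphi\|_{\mathbf{H}^q}^{2}=\int_{\mathbb{R}^n}|\hat\varphi(\boldsymbol{\omega})|^{2}\,e^{2\|\boldsymbol{\omega}\|^{2}\mu_T(t)}\,(1+\|\boldsymbol{\omega}\|^{2})^{q}\,d\boldsymbol{\omega}.
\end{align*}
Since the integrand vanishes outside the cube identified above, I would restrict the integral to that cube and pull the exponential factor out as its supremum, which is $\exp\bigl(2c\,2^{2J}\mu_T(t)\bigr)$. What remains is exactly $\|\varphi\|_{\mathbf{H}^q}^{2}$, and taking square roots yields the stated estimate, with $C_{2}$ absorbing the dimensional prefactor (as was done implicitly for the constant $C$ in the preceding Bernstein-type theorem).

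The only delicate point is correctly pinning down the Fourier support of a generic $\varphi\in V_J$: one must invoke \eqref{eq:m5} to reduce to the one-dimensional situation and then combine \eqref{eq:m1} with the dilation in \eqref{eq:m3}--\eqref{eq:m4} to describe the supported cube at level $J$. Once this support is in hand, the bound is an essentially one-line Fourier-side computation and does \emph{not} require the Bernstein-type inequality of the previous theorem; the underlying mechanism is simply that membership in $V_J$ caps the admissible frequencies at order $2^J$, which tames the otherwise explosive Fourier multiplier $e^{\|\boldsymbol{\omega}\|^{2}\mu_T(t)}$ coming from \eqref{eq:4}.
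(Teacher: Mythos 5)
Your strategy is genuinely different from the paper's and, on its face, more direct: you bound the Fourier multiplier $e^{\|\boldsymbol{\omega}\|^{2}\mu_{T}(t)}$ from \eqref{eq:4} by its supremum over the frequency support of $\hat\varphi$, whereas the paper expands the exponential in a power series, identifies each term $\big\|(i\boldsymbol{\omega})^{2r}\hat\varphi\big\|$ with $\|\partial_{x}^{2r}\varphi\|_{\mathbf{H}^{q}}$, applies the Bernstein-type inequality \eqref{eq:a17} termwise, and resums the series into $\exp\big(2^{2(J-1)}\mu_{T}(t)\big)$. Your identification of the support cube $\big[-\tfrac{4\pi}{3}2^{J},\tfrac{4\pi}{3}2^{J}\big]^{n}$ from \eqref{eq:m1}, \eqref{eq:m4} and \eqref{eq:m7}, and the Plancherel computation, are both fine, and you are right that this route does not need the Bernstein inequality at all.

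The gap is in your last step, where you claim the ``dimensional prefactor'' can be absorbed into $C_{2}$. On the cube the supremum of $\|\boldsymbol{\omega}\|^{2}$ is $n\big(\tfrac{4\pi}{3}\big)^{2}2^{2J}$, so after taking square roots your argument delivers
\begin{align*}
\Vert F_{t}\varphi\Vert_{{\mathbf{H}}^{q}}\leq \exp\Big(n\big(\tfrac{4\pi}{3}\big)^{2}\,2^{2J}\mu_{T}(t)\Big)\Vert\varphi\Vert_{{\mathbf{H}}^{q}},
\end{align*}
and the factor $n\big(\tfrac{4\pi}{3}\big)^{2}>1$ sits \emph{inside} the exponent multiplying $2^{2J}\mu_{T}(t)$. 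No multiplicative constant $C_{2}$ can absorb it, since $\exp\big(n(\tfrac{4\pi}{3})^{2}2^{2J}\mu_{T}(t)\big)\big/\exp\big(2^{2J}\mu_{T}(t)\big)\to\infty$ as $J\to\infty$. As written you have therefore proved a strictly weaker estimate than the one stated, and the difference matters downstream: in Theorem \ref{thm:1} the parameter $2^{2J^{\ast}}$ is tuned against $\ln(M/\delta)$, so a constant in the exponent changes the H\"older exponent of the final error bound. In fairness, the paper's own proof conceals the same constant inside \eqref{eq:a17}, whose constant $C$ should really grow like $\big(\tfrac{8\pi}{3}\big)^{r}$ with the differentiation order $r$, which after resummation produces exactly the same inflated exponent. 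But a complete proof of the lemma as stated must either track this constant explicitly or restate the bound with the corrected exponent; the absorption claim papers over the issue rather than resolving it.
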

\begin{proof}
 Let $\varphi\in V_{J}$, then we have
\begin{align*}
\Vert F_{t}\varphi\Vert_{{\mathbf{H}}^{q}}&=\Bigg(\int_{\Bbb R^{n}}\big\vert \widehat{F_{t}\varphi}(\boldsymbol{\omega})\big\vert^{2}(1+\Vert\boldsymbol{\omega}\Vert^{2})^{q}\,d\boldsymbol{\omega}\Bigg)^{1/2}\\
&=\Bigg(\int_{\Bbb R^{n}}\big\vert \widehat{\varphi}(\boldsymbol{\omega})e^{\boldsymbol{\omega}^{2}\mu_{T}(t)}\big\vert^{2}(1+\Vert\boldsymbol{\omega}\Vert^{2})^{q}\,d\boldsymbol{\omega}\Bigg)^{1/2}\\
&=\Bigg(\int_{\Bbb R^{n}}\Bigg\vert \widehat{\varphi}(\boldsymbol{\omega})\sum_{r=0}^{+\infty}\frac{(\boldsymbol{\omega}^{2}\mu_{T}(t))^{r}}{r!}\Bigg\vert^{2}(1+\Vert\boldsymbol{\omega}\Vert^{2})^{q}\,d\boldsymbol{\omega}\Bigg)^{1/2}\\
&=\Bigg(\int_{\Bbb R^{n}}\Bigg\vert \widehat{\varphi}(\boldsymbol{\omega})\sum_{r=0}^{+\infty}\frac{(\mu_{T}(t))^{r}}{r!}\boldsymbol{\omega}^{2r}\Bigg\vert^{2}(1+\Vert\boldsymbol{\omega}\Vert^{2})^{q}\,d\boldsymbol{\boldsymbol{\omega}}\Bigg)^{1/2}\\
&=\sum_{r=0}^{+\infty}\frac{\big(\mu_{T}(t)\big)^{r}}{r!}\Bigg(\int_{\Bbb R^{n}}\Big\vert (i\boldsymbol{\omega})^{2r} \widehat{\varphi}(\boldsymbol{\omega})\Big\vert^{2}(1+\Vert\boldsymbol{\omega}\Vert^{2})^{q}\,d\boldsymbol{\omega}\Bigg)^{1/2}\\
&=\sum_{r=0}^{+\infty}\frac{\big(\mu_{T}(t)\big)^{r}}{r!}\Big\Vert \partial_{x}^{2r} \varphi\Big\Vert_{{\mathbf{H}}^{q}}\\
&\leq C\sum_{r=0}^{+\infty}\frac{\big(\mu_{T}(t)\big)^{r}}{r!}\cdot n2^{2(J-1)r}\Vert\varphi\Vert_{{\mathbf{H}}^{q}}\\
&=C_{2}\sum_{r=0}^{+\infty}\frac{\big(2^{2(J-1)}\mu_{T}(t)\big)^{r}}{r!}\Vert\varphi\Vert_{{\mathbf{H}}^{q}}\\
&=C_{2}\exp\Big(2^{2(J-1)}\mu_{T}(t)\Big)\Vert\varphi\Vert_{{\mathbf{H}}^{q}}\\
&\leq C_{2}\exp\Big(2^{2J}\mu_{T}(t)\Big)\Vert\varphi\Vert_{{\mathbf{H}}^{q}}.
\end{align*}
The proof is complete.
\end{proof}
\section{ Wavelet Regularization and Convergence Analysis }\label{Section4}
In this section, we suppose that functions $\varphi_{T}(\cdot)\in {\mathscr{L}}^2(\Bbb R^{n})$ and $\varphi_{T,m}(\cdot)$ are exact and measured data at $t=T$ satisfying
\begin{align}\label{eq:b1}
\Vert\varphi_{T}-\varphi_{T,m}\Vert_{{\mathbf{H}}^{q}}\leq\delta\quad \text{for some $q\leq 0$.}
\end{align}
 In general, we know that $\varphi_{T,m}(\cdot)\in {\mathscr{L}}^2(\Bbb R^{n})\subset {\mathbf{H}}^{q}(\Bbb R^{n})$ for $q\leq 0$. The major goal of this section is to provide a sharp approximation of the exact solution $u(\cdot,t)$  for $0\leq t <T$. To this end, we assume that $\varphi_{0}(\mathbf{x}):=u(\mathbf{x},0)\in {\mathbf{H}}^{p}(\Bbb R^{n})$ for some $p\geq q$, and
\begin{align}\label{eq:b2}
\Vert \varphi_{0}\Vert_{{\mathbf{H}}^{p}}\leq M ,
\end{align}
where $M$ is a positive constant.
 In order to find the regularization parameter $J$ and to obtain some stability estimates of the H\"{o}lder and Logarithmic types, we use the following lemma which provided in \cite{Chu} for choosing a proper regularization parameter $J$.
  \begin{lem}\label{lem2} \cite{Chu}
   Let $c\in\Bbb R$ and the parameters $a<1$, $b$ and $d$ be positive constants. Then the function $f:[0,a]\longrightarrow\Bbb R$ defined by
  \begin{align}\label{eq:aa6}
  f(\lambda)=\lambda^{b}\Big(d\ln\frac{1}{\lambda}\Big)^{-c},
  \end{align}
 is invertible and
  \begin{align}\label{eq:aaa6}
  f^{-1}(\lambda)=\lambda^{\frac{1}{b}}\Big(\frac{d}{b}\ln\frac{1}{\lambda}\Big)^{\frac{c}{b}}(1+o(1))\qquad\text{for}\qquad \lambda\to 0.
  \end{align}
  \end{lem}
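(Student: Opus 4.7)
The plan is to prove invertibility first, and then derive the asymptotic expansion of $f^{-1}$ by taking logarithms and iterating.

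For invertibility, I would first check that $f$ extends continuously to $\lambda=0$ with $f(0)=0$ (the factor $\lambda^{b}$ dominates $(d\ln(1/\lambda))^{-c}$ whatever the sign of $c$, since $b>0$). Next I would compute the logarithmic derivative: writing $\ln f(\lambda)=b\ln\lambda-c\ln(-d\ln\lambda)$ one gets
\begin{equation*}
\frac{f'(\lambda)}{f(\lambda)}=\frac{1}{\lambda}\left(b+\frac{c}{\ln(1/\lambda)}\right),
\end{equation*}
which is strictly positive once $\ln(1/\lambda)>|c|/b$, i.e. once $\lambda$ is small enough. Shrinking $a$ if necessary, $f$ is therefore strictly increasing and continuous on $[0,a]$, hence a homeomorphism onto $[0,f(a)]$, and $f^{-1}$ is well defined near the origin.

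For the asymptotics, I would introduce the change of variables $L:=\ln(1/\lambda)=-\ln\lambda$ and $\mathcal{M}:=\ln(1/\mu)=-\ln\mu$, where $\mu=f(\lambda)$. Taking $-\log$ of $\mu=\lambda^{b}(d\ln(1/\lambda))^{-c}$ yields the implicit relation
\begin{equation*}
\mathcal{M}=bL+c\ln(dL),\qquad\text{equivalently}\qquad L=\frac{\mathcal{M}}{b}-\frac{c}{b}\ln(dL).
\end{equation*}
As $\mu\to 0$ we have $\mathcal{M}\to\infty$ and the first equality forces $L\to\infty$ with $L\sim\mathcal{M}/b$. Substituting this first-order estimate into the $\ln(dL)$ term gives the refined expansion
\begin{equation*}
L=\frac{\mathcal{M}}{b}-\frac{c}{b}\ln\!\left(\frac{d\mathcal{M}}{b}\right)+o(1),
\end{equation*}
because $\ln(dL)=\ln(d\mathcal{M}/b)+\ln\bigl(L/(\mathcal{M}/b)\bigr)$ and the second summand tends to $0$.

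Finally I would exponentiate back. Since $\lambda=e^{-L}$, the refined expansion for $L$ gives
\begin{equation*}
\lambda=e^{-\mathcal{M}/b}\left(\frac{d\mathcal{M}}{b}\right)^{c/b}e^{-o(1)}=\mu^{1/b}\left(\frac{d}{b}\ln\frac{1}{\mu}\right)^{c/b}(1+o(1)),
\end{equation*}
which, after relabelling the variable $\mu$ as $\lambda$, is precisely formula \eqref{eq:aaa6}. The routine steps are the continuity/monotonicity check and the exponentiation; the real work — and the only step where one has to be careful — is the logarithmic fixed-point iteration that upgrades the rough estimate $L\sim\mathcal{M}/b$ to an expression with a genuine $1+o(1)$ error, since naive manipulation loses the correct constant inside the $\ln(\cdot)$ factor.
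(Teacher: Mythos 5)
Your proof is correct. Note, however, that the paper itself gives no proof of this lemma: it is imported verbatim from the cited reference (Fu--Xiong--Qian), so there is nothing in the text to compare against step by step. Your argument is the standard one and it is sound: the logarithmic change of variables $L=\ln(1/\lambda)$, $\mathcal{M}=\ln(1/\mu)$ turns $\mu=f(\lambda)$ into $\mathcal{M}=bL+c\ln(dL)$, the crude estimate $L\sim\mathcal{M}/b$ follows since $\ln(dL)/L\to 0$, and one bootstrap of that estimate into the $\ln(dL)$ term gives $L=\mathcal{M}/b-(c/b)\ln(d\mathcal{M}/b)+o(1)$, which exponentiates to exactly \eqref{eq:aaa6}. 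You are also right that the bootstrap is the only delicate step: stopping at $L\sim\mathcal{M}/b$ would lose the $(\frac{d}{b}\ln\frac{1}{\mu})^{c/b}$ factor entirely, and an additive $o(1)$ error in $L$ is needed (rather than a relative $o(L)$ error) to get a genuine multiplicative $1+o(1)$ after exponentiation.

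One point worth making explicit, since it is a place where the lemma as stated is slightly too strong: for $c<0$ the logarithmic derivative $\frac{f'(\lambda)}{f(\lambda)}=\frac{1}{\lambda}\bigl(b+\frac{c}{\ln(1/\lambda)}\bigr)$ is negative for $\lambda$ between $e^{-|c|/b}$ and $a$ whenever $a>e^{-|c|/b}$, so $f$ need not be injective on all of $[0,a]$. Your remark that one must shrink $a$ is therefore not a cosmetic caveat but a genuine correction to the statement; it does not affect the asymptotic formula, which only concerns a neighbourhood of $0$, and it is harmless for the application in Theorem \ref{thm:1}, where the relevant quantity $C_{3}\delta/M$ tends to $0$.
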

  Following theorem shows that using an appropriate $J\in\Bbb N$, $F_{t,J}:=F_{t}\mathcal{P}_{J}$ is approximation of $F_{t}$ in a stable manner, where $J$ depending on $\delta$ and $M$.
\begin{thm}\label{thm:1}
 For $J\in\Bbb N$, the problem \eqref{eq:1} with the final data $\varphi_{T}$ in $V_{J}$ is well-posed. Suppose that $\varphi_{0}(\cdot)$ belongs to ${\mathbf{H}}^{p}(\Bbb R^{n})$ for some $p\in\Bbb R$ and the inequalities \eqref{eq:b1} and \eqref{eq:b2}  holds for $q\leq min\lbrace 0, p\rbrace$. Then the proposed method to compute $F_{t,J}\varphi_{T,m}$  is stable in the Hadamard sense. Moreover, for
\begin{align}\label{eq:b3}
J^{\ast}:=\left[\kern-0.25em\left[\frac{1}{2}\log_{2}\ln\Big(\Big(\frac{M}{C_{3}\delta}\Big)^{\frac{1}{\mu_{T}(0)}}\Big(\frac{1}{\mu_{T}(0)}
\ln\frac{M}{C_{3}\delta}\Big)^{-\frac{p-q}{2\mu_{T}(0)}}\Big)\right]\kern-0.25em\right],\qquad C_{3}:=\frac{C_{2}}{C_{2}+1},
\end{align}
where $\left[\kern-0.15em\left[a\right]\kern-0.15em\right]$ signifies the largest integer less than or equal to $a$. The following inequality is satisfied
\begin{align}\label{eq:b4}
\Vert F_{t}\varphi_{T}-F_{t,J^{\ast}}\varphi_{T,m}\Vert_{{\mathbf{H}}^{q}}&\leq (C_{2}+1)(C_{3}\delta)^{\frac{\mu_{t}(0)}{\mu_{T}(0)}}M^{1
-\frac{\mu_{t}(0)}{\mu_{T}(0)}}\Big(\frac{1}{\mu_{T}(0)}\ln\frac{M}{C_{3}\delta}\Big)^{
-\frac{p-q}{2}\frac{\mu_{T}(t)}{\mu_{T}(0)}}\big(1+o(1)\big),
\end{align}
for $\delta\to 0.$
\end{thm}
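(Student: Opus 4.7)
The plan is to split the error by the triangle inequality,
\[
\|F_t\varphi_T-F_{t,J}\varphi_{T,m}\|_{{\mathbf{H}}^q} \le \|F_t(I-\mathcal{P}_J)\varphi_T\|_{{\mathbf{H}}^q} + \|F_t\mathcal{P}_J(\varphi_T-\varphi_{T,m})\|_{{\mathbf{H}}^q},
\]
bound the approximation and noise-propagation pieces separately, and then tune $J$ to balance them. Well-posedness of the projected problem and Hadamard stability of $F_{t,J}$ both follow from Lemma \ref{lem1}: for any $\varphi\in V_J$, $F_t\varphi$ is controlled by the explicit, $\delta$-independent constant $C_2\exp(2^{2J}\mu_T(t))$, so $\mathcal{P}_J\varphi_{T,m}\in V_J$ yields a stable regularised evolution.

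For the noise term, $\mathcal{P}_J(\varphi_T-\varphi_{T,m})\in V_J$, so Lemma \ref{lem1} together with the ${\mathbf{H}}^q$-boundedness of the Meyer projection $\mathcal{P}_J$ and the data bound \eqref{eq:b1} yields $\|F_t\mathcal{P}_J(\varphi_T-\varphi_{T,m})\|_{{\mathbf{H}}^q}\le C_2 e^{2^{2J}\mu_T(t)}\delta$. For the truncation term I would exploit the support property $\widehat{(I-\mathcal{P}_J)\varphi_T}\equiv 0$ on $\Lambda_J$, which follows by combining \eqref{eq:m11}--\eqref{eq:m14} with the identity $I-\mathcal{P}_J=(I-\mathcal{P}_J)M_J$. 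Restricting the ${\mathbf{H}}^q$-integral to $\Gamma_J$ and factoring the integrand as
\[
\bigl|\widehat{(I-\mathcal{P}_J)\varphi_T}(\boldsymbol{\omega})\bigr|^2 e^{2\boldsymbol{\omega}^2\mu_T(0)}(1+\|\boldsymbol{\omega}\|^2)^p \cdot e^{-2\boldsymbol{\omega}^2\mu_t(0)}(1+\|\boldsymbol{\omega}\|^2)^{q-p},
\]
one pulls the supremum of the second (decreasing, since $\mu_t(0):=\mu_T(0)-\mu_T(t)>0$ and $q-p\le 0$) factor out of the integral, giving $e^{-2(2\pi 2^J/3)^2\mu_t(0)}(1+(2\pi 2^J/3)^2)^{q-p}$; the remaining integral is $\|F_0(I-\mathcal{P}_J)\varphi_T\|_{{\mathbf{H}}^p}^2\le \|\varphi_0\|_{{\mathbf{H}}^p}^2\le M^2$ by \eqref{eq:b2}.

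Combining these produces a bound $A+B$ with $A=C_2 e^{2^{2J}\mu_T(t)}\delta$ growing in $J$ and $B\le M e^{-2^{2J}\mu_t(0)}(1+2^{2J})^{(q-p)/2}$ decaying in $J$. Equating the two (and using $\mu_T(0)=\mu_T(t)+\mu_t(0)$) reduces the balancing to the inverse-function equation of Lemma \ref{lem2} with parameters $b=1$, $d=1/\mu_T(0)$, $c=(p-q)/2$, and $\lambda=C_3\delta/M$; here $C_3=C_2/(C_2+1)$ is the rescaling that makes $A+B$ collapse to exactly $(C_2+1)$ times the common equality value. Lemma \ref{lem2} then produces the closed-form $J^{\ast}$ in \eqref{eq:b3}, and substituting $J=J^\ast$ into the common value of $A$ and $B$ yields the stated H\"older-with-logarithmic-correction estimate \eqref{eq:b4}, the $1+o(1)$ coming from the asymptotic in \eqref{eq:aaa6}.

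The main obstacle will be justifying $\|F_0(I-\mathcal{P}_J)\varphi_T\|_{{\mathbf{H}}^p}\le M$: since $F_0$ and $\mathcal{P}_J$ do not literally commute, one cannot simply transfer the projection onto the initial data. The remedy is to invoke $I-\mathcal{P}_J=(I-\mathcal{P}_J)M_J$ so as to expose the genuine Fourier cutoff $M_J$ (a multiplier that does commute with $F_0$), and then to use the uniform operator bound $\|I-\mathcal{P}_J\|_{{\mathbf{H}}^p\to{\mathbf{H}}^p}\le\mathrm{const}$ that follows from the MRA structure. A secondary care point is verifying the monotonicity needed to pull the supremum out in the truncation estimate, which requires precisely the hypothesis $q\le p$ already assumed.
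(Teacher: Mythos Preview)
Your overall architecture---triangle inequality, Lemma~\ref{lem1} for the data-noise piece, a frequency-domain estimate for the truncation piece, and Lemma~\ref{lem2} to balance---is exactly the paper's. The point you flag as ``the main obstacle'' is, however, a genuine gap, and the remedy you sketch does not close it. Invoking $I-\mathcal{P}_J=(I-\mathcal{P}_J)M_J$ does expose the Fourier multiplier $M_J$, but $I-\mathcal{P}_J$ remains sandwiched between $F_0$ and $M_J$, so the commutation of $F_0$ with $M_J$ is never actually usable; and a uniform bound $\|I-\mathcal{P}_J\|_{{\mathbf H}^p\to{\mathbf H}^p}\le C$ controls $\|(I-\mathcal{P}_J)\psi\|_{{\mathbf H}^p}$ by $\|\psi\|_{{\mathbf H}^p}$, not $\|F_0(I-\mathcal{P}_J)\psi\|_{{\mathbf H}^p}$ by $\|F_0\psi\|_{{\mathbf H}^p}$. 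Concretely, $\mathcal{Q}_J$ is not a Fourier multiplier, so $\widehat{\mathcal{Q}_J M_J\varphi_T}$ can carry mass out to $\|\boldsymbol{\omega}\|_\infty\sim\tfrac{8}{3}\pi 2^J$ while $\widehat{M_J\varphi_T}$ is only damped by $e^{-\boldsymbol{\omega}^2\mu_T(0)}$ at the inner radius $\tfrac{2}{3}\pi 2^J$; the mismatch makes your ``remaining integral'' $\|F_0(I-\mathcal{P}_J)\varphi_T\|_{{\mathbf H}^p}$ potentially blow up in $J$, and pulling the sup of the decreasing factor over all of $\Gamma_J$ is then too crude to compensate.

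The paper sidesteps this entirely by splitting the truncation integral \emph{before} factoring, into $\Gamma_{J+1}$ and $\Lambda_{J+1}$. On $\Gamma_{J+1}$ one has $\widehat{(I-\mathcal{P}_J)\varphi_T}=\widehat{\varphi_T}$ exactly (by \eqref{eq:m11}), so $e^{\boldsymbol{\omega}^2\mu_T(t)}\widehat{\varphi_T}=e^{-\boldsymbol{\omega}^2\mu_t(0)}\widehat{\varphi_0}$ and your sup-then-integrate argument runs cleanly with the residual integral literally bounded by $\|\varphi_0\|_{{\mathbf H}^p}\le M$. On $\Lambda_{J+1}$ one has $\widehat{(I-\mathcal{P}_J)\varphi_T}=\widehat{\mathcal{Q}_J\varphi_T}$ with $\mathcal{Q}_J\varphi_T\in W_J\subset V_{J+1}$, so Lemma~\ref{lem1} is applied to $F_t$ (not $F_0$) to extract a factor $C_2 e^{2^{2J}\mu_T(t)}$, and only then is $\mathcal{Q}_J=\mathcal{Q}_JM_J$ used to bound $\|\mathcal{Q}_J\varphi_T\|_{{\mathbf H}^q}$ by $e^{-2^{2J}\mu_T(0)}2^{-J(p-q)}M$; the product collapses to $C_2 e^{-2^{2J}\mu_t(0)}2^{-J(p-q)}M$. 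The two regions are thus handled by genuinely different mechanisms, and the split at level $J{+}1$ is what makes both work.
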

\begin{proof}
It is easy to see that
\begin{align}\label{eq:b5}
\Vert F_{t}\varphi_{T}- F_{t,J}\varphi_{T,m}\Vert_{{\mathbf{H}}^{q}}&\leq\Vert F_{t}\varphi_{T} -F_{t,J}\varphi_{T}\Vert_{{\mathbf{H}}^{q}}+\Vert F_{t,J}\varphi_{T} -F_{t,J}\varphi_{T,m}\Vert_{{\mathbf{H}}^{q}}\notag\\
&=: N_{1}+N_{2}.
\end{align}
 It follows from the Lemma \ref{lem1} and the condition \eqref{eq:b1} that
\begin{align}\label{eq:b6}
N_{2}=\Vert F_{t,J}\varphi_{T} -F_{t,J}\varphi_{T,m}\Vert_{{\mathbf{H}}^{q}}&=\Vert F_{t}\mathcal{P}_{J}(\varphi_{T} -\varphi_{T,m})\Vert_{{\mathbf{H}}^{q}}\notag\\
&\leq C_{2}\exp\Big(2^{2J}\mu_{T}(t)\Big)\Vert \mathcal{P}_{J}(\varphi_{T}-\varphi_{T,m})\Vert_{{\mathbf{H}}^{q}}\notag\\
&\leq C_{2}\exp\Big(2^{2J}\mu_{T}(t)\Big)\delta.
\end{align}
Moreovere, from \eqref{eq:m11}, we have
\begin{align}\label{eq:b7}
N_{1}=\Vert F_{t}\varphi_{T}- F_{t,J}\varphi_{T}\Vert_{{\mathbf{H}}^{q}}&=\Vert F_{t}(I-\mathcal{P}_{J})\varphi_{T}\Vert_{{\mathbf{H}}^{q}}\notag\\
&=\Big(\int_{\Bbb R^{n}}\big\vert e^{\boldsymbol{\omega}^{2}\mu_{T}(t)}\widehat{((I-\mathcal{P}_{J})\varphi_{T})}(\boldsymbol{\omega})\big\vert^{2}(1+\Vert\boldsymbol{\omega}\Vert^{2})^{q}\,d\boldsymbol{\omega}\Big)^{1/2}\notag\\
&=\Big(\int_{\Gamma_{J+1}}\big\vert e^{\boldsymbol{\omega}^{2}\mu_{T}(t)}\widehat{\varphi_{T}}(\boldsymbol{\omega})\big\vert^{2}(1+\Vert\boldsymbol{\omega}\Vert^{2})^{q}\,d\boldsymbol{\omega}\Big)^{1/2}\notag\\
&+\Big(\int_{\Lambda_{J+1}}\big\vert e^{\boldsymbol{\omega}^{2}\mu_{T}(t)}\widehat{((I-\mathcal{P}_{J})\varphi_{T})}(\boldsymbol{\omega})\big\vert^{2}(1+\Vert\boldsymbol{\omega}\Vert^{2})^{q}\,d\boldsymbol{\omega}\Big)^{1/2}\notag\\
&=:I_{1}+I_{2}.
\end{align}
 We can calculate
\begin{align}\label{eq:b8}
I_{1}&=\Big(\int_{\Gamma_{J+1}}\big\vert e^{\boldsymbol{\omega}^{2}\mu_{T}(t)}\widehat{\varphi_{T}}(\boldsymbol{\omega})\big\vert^{2}(1+\Vert\boldsymbol{\omega}\Vert^{2})^{q}\,d\boldsymbol{\omega}\Big)^{1/2}\notag\\
&=\Big(\int_{\Gamma_{J+1}}\big\vert e^{-\boldsymbol{\omega}^{2}\mu_{t}(0)}\widehat{\varphi_{0}}(\boldsymbol{\omega})\big\vert^{2}(1+\Vert\boldsymbol{\omega}\Vert^{2})^{q}\,d\boldsymbol{\omega}\Big)^{1/2}\notag\\
&=\Big(\int_{\Gamma_{J+1}}\big\vert e^{-\boldsymbol{\omega}^{2}\mu_{t}(0)}\big\vert^{2}\big\vert\widehat{\varphi_{0}}(\boldsymbol{\omega})\big\vert^{2}(1+\Vert\boldsymbol{\omega}\Vert^{2})^{q}\,d\boldsymbol{\omega}\Big)^{1/2}\notag\\
&\leq \sup_{\boldsymbol{\omega}\in\Gamma_{J+1}}e^{-\boldsymbol{\omega}^{2}\mu_{t}(0)}\frac{1}{(1+\Vert\boldsymbol{\omega}\Vert^{2})^{\frac{p-q}{2}}}\Big(\int_{\Gamma_{J+1}}\Big\vert\widehat{\varphi_{0}}(\boldsymbol{\omega})\Big\vert^{2}(1+\Vert\boldsymbol{\omega}\Vert^{2})^{p}\,d\boldsymbol{\omega}\Big)^{1/2}\notag\\
&\leq\sup_{\boldsymbol{\omega}\in\Gamma_{J+1}}e^{-\boldsymbol{\omega}^{2}\mu_{t}(0)}\Vert\boldsymbol{\omega}\Vert^{-(p-q)}\Vert \varphi_{0}\Vert_{{\mathbf{H}}^{p}}\notag\\
&\leq e^{-n(\frac{4}{3}\pi 2^{J})^{2}\mu_{t}(0)}\big(n(\frac{4}{3}\pi 2^{J})^{2}\big)^{-\frac{p-q}{2}}M\notag\\
&\leq e^{- 2^{2J}\mu_{t}(0)}2^{-2J(\frac{p-q}{2})}M.
\end{align}
 Due to the \eqref{eq:m12}, and noting that $\mathcal{Q}_{J}\varphi\in W_{J}\subset V_{J+1}$,  the integrate $I_{2}$ defined by \eqref{eq:b7} satisfies
\begin{align}\label{eq:b9}
I_{2}&=\Big(\int_{\Lambda_{J+1}}\big\vert e^{\boldsymbol{\omega}^{2}\mu_{T}(t)}\widehat{(I-\mathcal{P}_{J})\varphi_{T}}(\boldsymbol{\omega})\big\vert^{2}(1+\Vert\boldsymbol{\omega}\Vert^{2})^{q}\,d\boldsymbol{\omega}\Big)^{1/2}\notag\\
&=\Vert F_{t}\mathcal{Q}_{J}\varphi_{T}\Vert_{{\mathbf{H}}^{q}}\notag\\
&\leq C_{2}\exp\Big(2^{2J}\mu_{T}(t)\Big)\Vert \mathcal{Q}_{J}\varphi_{T}\Vert_{{\mathbf{H}}^{q}}.
\end{align}
 Noting that $\varphi_{T}\in {\mathscr{L}}^2(\Bbb R^{n})$, the Parseval formula, and \eqref{eq:m8}, we can derive that
\begin{align*}
\mathcal{Q}_{J}\varphi_{T} &=\sum_{K=-\infty}^{+\infty}\big\langle\varphi_{T},\Psi_{JK}\big\rangle\Psi_{JK}\\
&=\sum_{K=-\infty}^{+\infty}\big\langle\widehat{\varphi_{T}},\hat{\Psi}_{JK}\big\rangle\Psi_{JK}\\
&=\sum_{K=-\infty}^{+\infty}\big\langle(1-\chi_{J})\widehat{\varphi_{T}},\hat{\Psi}_{JK}\big\rangle\Psi_{JK}\\
&=\sum_{K=-\infty}^{+\infty}\big\langle M_{J}\varphi_{T},\hat{\Psi}_{JK}\big\rangle\Psi_{JK}\\
&=\mathcal{Q}_{J}M_{J}\varphi_{T}.
\end{align*}
So, we conclude that
\begin{align*}
\Vert \mathcal{Q}_{J}\varphi_{T}\Vert_{{\mathbf{H}}^{q}}&=\Vert \mathcal{Q}_{J}M_{J}\varphi_{T}\Vert_{{\mathbf{H}}^{q}}\\
&\leq \Vert M_{J}\varphi_{T}\Vert_{{\mathbf{H}}^{q}}\\
&=\Big(\int_{\Lambda_{J+1}}\big\vert\widehat{\varphi_{T}}(\boldsymbol{\omega})\big\vert^{2}(1+\Vert\boldsymbol{\omega}\Vert^{2})^{q}\,d\boldsymbol{\omega}\Big)^{1/2}\\
&=\Big(\int_{\Lambda_{J+1}}\Big\vert e^{-\boldsymbol{\omega}^{2}\mu_{T}(0)} \widehat{\varphi_{0}}(\boldsymbol{\omega})\Big\vert^{2}(1+\Vert\boldsymbol{\omega}\Vert^{2})^{q}\,d\boldsymbol{\omega}\Big)^{1/2}\\
&\leq \sup_{\boldsymbol{\omega}\in\Lambda_{J+1}}e^{-\boldsymbol{\omega}^{2}\mu_{T}(0)}\frac{1}{(1+\Vert\boldsymbol{\omega}\Vert^{2})^{\frac{p-q}{2}}}\Big(\int_{\boldsymbol{\omega}\in\Lambda_{J+1}}\Big\vert\widehat{\varphi_{0}}(\boldsymbol{\omega})\Big\vert^{2}(1+\Vert\boldsymbol{\omega}\Vert^{2})^{p}\,d\boldsymbol{\omega}\Big)^{1/2}\\
&\leq\sup_{\boldsymbol{\omega}\in\Lambda_{J+1}}e^{-\boldsymbol{\omega}^{2}\mu_{T}(0)}\Vert\boldsymbol{\omega}\Vert^{-(p-q)}\Vert \varphi_{0}\Vert_{{\mathbf{H}}^{p}}\notag\\
&\leq e^{-n(\frac{2}{3}\pi 2^{J})^{2}\mu_{T}(0)}\big(n(\frac{2}{3}\pi 2^{J})^{2}\big)^{-\frac{p-q}{2}}M\notag\\
&\leq e^{- 2^{2J}\mu_{T}(0)}2^{-2J(\frac{p-q}{2})}M.
\end{align*}
Therefore,
\begin{align}\label{eq:b10}
I_{2}\leq C_{2}e^{-2^{2J}\mu_{t}(0)}2^{-2J(\frac{p-q}{2})}M,
\end{align}
together with \eqref{eq:b8}, we get
\begin{align}\label{eq:b11}
\Vert F_{t,J}\varphi_{T} -F_{t,J}\varphi_{T,m}\Vert_{{\mathbf{H}}^{q}}\leq (C_{2}+1)e^{-2^{2J}\mu_{t}(0)}2^{-2J(\frac{p-q}{2})}M.
\end{align}
 Combining \eqref{eq:b11} with \eqref{eq:b6}, we obtain
\begin{align}\label{eq:b12}
\Vert F_{t}\varphi_{T}- F_{t,J}\varphi_{T,m}\Vert_{{\mathbf{H}}^{q}}&\leq C_{2}\exp\Big(2^{2J}\mu_{T}(t)\Big)\delta+ (C_{2}+1)e^{-2^{2J}\mu_{t}(0)}2^{-2J(\frac{p-q}{2})}M,
\end{align}
where $C_{2}$ is given by Lemma \ref{lem1}. Based on Lemma
\ref{lem2}, the regularization parameter $J$ can be chosen by
minimizing the right-hand side of \eqref{eq:b12}.  Set
$e^{-2^{2J}}:=\lambda; \lambda\in(0,1)$ and
$C_{3}:=\frac{C_{2}}{C_{2}+1}$. Thus we have
  \begin{align}\label{eq:a'1}
C_{3}\lambda^{-\mu_{T}(t)}\delta=\lambda^{\mu_{t}(0)}\Big(\ln\frac{1}{\lambda}\Big)^{-\frac{p-q}{2}}M,
  \end{align}
  this leads to
  \begin{align}\label{eq:a'2}
  \frac{C_{3}\delta}{M}=\lambda^{\mu_{T}(0)}\Big(\ln\frac{1}{\lambda}\Big)^{-\frac{p-q}{2}},
  \end{align}
 i.e., $b=\mu_{T}(0), d=1, c=\frac{p-q}{2}$ in \eqref{eq:aa6}. Then by Lemma \ref{lem2}, we can calculate that
  \begin{align}\label{eq:a'3}
  \lambda&=\Big( \frac{C_{3}\delta}{M}\Big)^{\frac{1}{\mu_{T}(0)}}\Big(\frac{1}{\mu_{T}(0)}\ln\frac{M}{C_{3}\delta}\Big)^{\frac{p-q}{2\mu_{T}(0)}}(1+o(1))\qquad\text{for}\qquad \frac{C_{3}\delta}{M}\to 0.
  \end{align}
  Taking the principal part of $\lambda$, given by \eqref{eq:a'3} and due to the $e^{-2^{2J}}=\lambda$, we have
  \begin{align}\label{eq:a'4}
  J=\frac{1}{2}\log_{2}\ln\bigg(\Big(\frac{M}{C_{3}\delta}\Big)^{\frac{1}{\mu_{T}(0)}}\Big(\frac{1}{\mu_{T}(0)}\ln\frac{M}{C_{3}\delta}\Big)^{-\frac{p-q}{2\mu_{T}(0)}}\bigg).
  \end{align}
Now, summarizing above inference process, we can conclude that
\begin{align}\label{eq:b13}
\Vert F_{t}\varphi_{T}-F_{t,J^{\ast}}\varphi_{T,m}\Vert_{{\mathbf{H}}^{q}}&\leq (C_{2}+1)(C_{3}\delta)^{\frac{\mu_{t}(0)}{\mu_{T}(0)}}M^{1-\frac{\mu_{t}(0)}{\mu_{T}(0)}}\Big(\frac{1}{\mu_{T}(0)}\ln\frac{M}{C_{3}\delta}\Big)^{-\frac{p-q}{2}\frac{\mu_{T}(t)}{\mu_{T}(0)}}\notag\\
&\times\Bigg\lbrace 1+\Bigg(\frac{\frac{1}{\mu_{T}(0)}\ln\frac{M}{C_{3}\delta}}{\frac{1}{\mu_{T}(0)}\ln\frac{M}{C_{3}\delta}+\ln\big(\frac{1}{\mu_{T}(0)}\ln\frac{M}{C_{3}\delta}\big)^{-\frac{p-q}{2\mu_{T}(0)}}}\Bigg)^{\frac{p-q}{2}}\Bigg\rbrace \notag\\
&=(C_{2}+1)(C_{3}\delta)^{\frac{\mu_{t}(0)}{\mu_{T}(0)}}M^{1-\frac{\mu_{t}(0)}{\mu_{T}(0)}}\Big(\frac{1}{\mu_{T}(0)}\ln\frac{M}{C_{3}\delta}\Big)^{-\frac{p-q}{2}\frac{\mu_{T}(t)}{\mu_{T}(0)}}\big(1+o(1)\big)
\end{align}
for $\delta\to 0$. Therefore, the proof is complete.
\end{proof}
Theorem \ref{thm:1} suggests how to define a wavelet regularized approximation of disturbed BHCP.
\begin{rem}\label{remark1}
For  $p=q=0$,  the inequality \eqref{eq:b4} reduces to the following ${\mathscr{L}}^2$-estimate of the H\"{o}lder type
 \begin{align}\label{eq:b15}
\Vert F_{t}\varphi_{T}-F_{t,J^{\ast}}\varphi_{T,m}\Vert_{{\mathscr{L}}^2(\Bbb R^{n})}&\leq (C_{2}+1)(C_{3}\delta)^{\frac{\mu_{t}(0)}{\mu_{T}(0)}}M^{1-\frac{\mu_{t}(0)}{\mu_{T}(0)}}\big(1+o(1)\big)\quad\text{for}\quad \delta\to 0.
 \end{align}
Note that the inequality \eqref{eq:b15} does not guarantee the convergence of the approximate solution $F_{t,J}\varphi(\mathbf{x},t)$ at $t=0$. For $t=0$, it provides an upper bound for error estimate by $(C_{2}+1) M$ which can not be improved in ${\mathscr{L}}^2$-scale. On the other hand, for $p-q>0$, the inequality \eqref{eq:b13} shows that the convergence of the regularization solution for $0\leq t<T$ is faster than the approximate solution given by \eqref{eq:b15}. Especially, at $t=0$ the estimate \eqref{eq:b13} becomes
 \begin{align*}
 \Vert F_{0}\varphi_{T}-F_{0,J^{\ast}}\varphi_{T,m}\Vert_{{\mathbf{H}}^{q}}&=\Vert \varphi_{0}-F_{0,J^{\ast}}\varphi_{T,m}\Vert_{{\mathbf{H}}^{q}}\\
&\leq (C_{2}+1)M\Big(\frac{1}{\mu_{T}(0)}\ln\frac{M}{\delta}\Big)^{-\frac{p-q}{2}}\big(1+o(1)\big)\qquad\text{for}\qquad \delta\to 0.
 \end{align*}
 which is a ${\mathbf{H}}^{q}$-estimate of the Logarithmic type.
 \end{rem}
 \begin{rem}\label{remark2}
 The condition $p-q>0$ is not harsh. By the Sobolev imbedding theorem, the smoothness of $\varphi_{0}(\cdot):=u(\cdot,0)$ is only slightly raised. For example, taking $q=0$ and $p=\frac{1}{2}$, then $\varphi_{0}(\cdot)$ would be in $C^{0}(\Bbb R^{n})$.
 \end{rem}
 \begin{rem}\label{remark3}
 In practice, the constants $C,C_{0},C_{1},C_{2},C_{3}$ and \textit{a priori} bound $M$. For $M=1$, we have
 \begin{align*}
J^{\dagger }:=\Big[\frac{1}{2}\log_{2}\ln\bigg(\Big(\frac{1}{\delta}\Big)^{\frac{1}{\mu_{T}(0)}}\Big(\frac{1}{\mu_{T}(0)}\ln\frac{1}{\delta}\Big)^{-\frac{p-q}{2\mu_{T}(0)}}\bigg)\Big],
 \end{align*}
and we set $u_{J^{\dagger}}^{\delta}:=F_{t,J^{\dagger}}\varphi_{T,m}$, then there satisfies the estimate
 \begin{align*}
 \Vert u(\cdot,t) -u_{J^{\dagger}}^{\delta}(\cdot,t)\Vert_{{\mathscr{L}}^2(\Bbb R^{n})}&\leq 2\delta^{\frac{\mu_{t}(0)}{\mu_{T}(0)}}\Big(\frac{1}{\mu_{T}(0)}\ln\frac{1}{\delta}\Big)^{-\frac{p-q}{2}\frac{\mu_{T}(t)}{\mu_{T}(0)}}\big(1+o(1)\big),
 \end{align*}
  for $\delta\to 0$.
 \end{rem}
 Note that the ``optimal" or ``order optimal" estimation for the upper bound of the inequality given by \eqref{eq:b4} is of the H\"{o}lder and Logarithmic forms \cite{Tautenhahn1,Tautenhahn2,Tautenhahn3}.  Thus, the proposed technique is of order optimal and there is no an other efficient approximation method to approximate the solution of problem given by \eqref{eq:1}. So, the wavelet methods are useful for ill-posed problems.
 \section{Numerical Treatment}\label{Section5}
  Here, we provide the numerical simulations  of the proposed Meyer wavelet regularization (MWR) method for one and two dimensional cases of the  BHCP with smooth and non-smooth data. The computations associated with the examples were performed using {\textsc{Mathematica} 10.0}. To derive the disturbation data, we add a random uniformly distributed perturbation to any data as follows.
 \begin{eqnarray}\label{eq:f1}
\varphi_{T,m}:=\varphi_{T}+\epsilon \ \verb"RandomReal[NormalDistribution[.],"
 \{\verb"Length"[\varphi_{T}],\verb"Length"[\varphi_{T}]\}];
 \end{eqnarray}
 where  \verb"RandomReal[.]" gives a pseudorandom real number in the range of $0$ to $1$, \verb"NormalDistribution[.]" represents a normal distribution with zero mean and unit standard deviation and $\epsilon$ dicates the level of noise.
  According to the Section 4,  the function $\varphi_{T,m}$ is formulated as
 \begin{align}\label{eq:f3}
 u_{J^{\ast}}^{\delta}:=F_{t,J^{\ast}}\varphi_{T,m}=F_{t}\mathcal{P}_{J^{\ast}}\varphi_{T,m},
 \end{align}
 where  the regularization parameter $J^{\ast}$ is
 \begin{align}\label{eq:f4}
J^{\ast
}:=\left[\kern-0.25em\left[\frac{1}{2}\log_{2}\ln\bigg(\Big(\frac{1}{\delta}\Big)^{\frac{1}{\mu_{T}(0)}}
\Big(\frac{1}{\mu_{T}(0)}\ln\frac{1}{\delta}\Big)^{-\frac{p-q}{2\mu_{T}(0)}}\bigg)\right]\kern-0.25em\right].
 \end{align}
 Here, the sequence $\lbrace \varphi_{T}(\textbf{x}_{i})\rbrace_{i=1}^{N}$ represents samples of the functions $\varphi_{T}(\textbf{x}_{i})$ on an equidistant grid, for  an even number  $N$.
 Though this section $J$ denotes the the number of subspace $V_{J}$.
 \subsection{One dimensional examples}
 Here, the numerical results of the MWR method to solve some one-dimensional cases of the  BHCP with smooth and non-smooth data are illustrated.
 \begin{exm}\label{exm0}
 Consider the following BHCP  with the smooth data
 \begin{align}\label{neweq:N1}
\left\{
\begin{array}{ll}
\partial_{t}u(x,t)=\kappa(t)\partial_{xx}u(x,t), \quad (x,t)\in\left\{ (x,t)\vert\ 0\leq t\leq 1,\ 0\leq x\leq \pi \right\},\\
u(x,T)=\exp(\mu_{T}(0))\frac{\sin(x)}{\exp(2)},\quad 0\leq x\leq \pi
\end{array}\right.
\end{align}
where $\kappa(t)=\kappa t+1$ and $\kappa$ is a positive constant. The closed form analytical solution of Example \ref{exm0}  is
\begin{align}
u(x,t)=\exp(\mu_{t}(0))\frac{\sin(x)}{\exp(2)}.
\end{align}
 \end{exm}
For the level of noises $\epsilon=10^{-1}, 10^{-2},10^{-3},10^{-4}$, the proposed MWR method is implemented to solve this example.  For $\kappa=2$, Figure \ref{fig:imag2} illustrates the comparison between the regularization and exact solutions with different levels of noise added into the final data. Corresponding to every level of noise, a regularization parameter is chosen. Moreover,  it can be seen that the approximate solution converges to the exact solution, when the magnitude of noise decreases. For $\kappa=2$, the absolute error  and the relative error of the proposed MWR method for the Example~\ref{exm0} are also presented in Table \ref{table1}.
 The Table \ref{table1} and Figure  \ref{fig:imag2} show that for each value of $\epsilon$, the regularized solution in space $V_{3}$ is more accurate with respect to other regularized solutions for each value of $\epsilon$. This figure illustrates that the small noise level leads to an accurate the approximate solution. In \cite{Tuan}, a regularization technique and error estimates  developed for the one-dimensional BHCP given by the Example \ref{exm0}, when   $\kappa=2$.  The absolute and relative errors, derived by \cite{Tuan}, are of order $10^{-1}$. While the absolute and relative errors of the proposed MWR method in spaces $V_2$ and $V_3$ are of order $10^{-4}$ and $10^{-3}$, respectively. Therefore, the MWR method is more accurate than the method given by \cite{Tuan}.
The difficulty of the BHCP given by the Example \ref{exm0} is stemmed from that we attempt to retrieve the  initial data when the thermal diffusivity factor is a large value. To demonstrate the efficiency of the proposed MWR method, we consider the BHCP given by the Example \ref{exm0} with the thermal diffusivity factor $\kappa(t)=\kappa t+1$, where $\kappa=200$ and $\kappa=400$. For these cases, the numerical simulations are illustrated by Figure \ref{fig:imag0}. This figure shows that the MWR method is successful to retentive the solution for large values of the thermal diffusivity factor $\kappa$. For the level of noise $\epsilon=0.0001$, the absolute and relative errors of the proposed MWR method to solve the BHCP \eqref{neweq:N1} with $\kappa=200, 400$ in spaces $V_2$ and $V_3$ are of order $10^{-4}$ and $10^{-3}$, respectively. The presented numerical simulations confirm that the the proposed MWR method is an accurate and efficient regularization technique to solve the BHCP given by  the Example \ref{exm0}.  This adopts with our theoretical results as given by Section \ref{Section4}.
\begin{figure}[bt!]
\begin{center}
$ \begin{array}{cc}
   \includegraphics[width=2.8in] {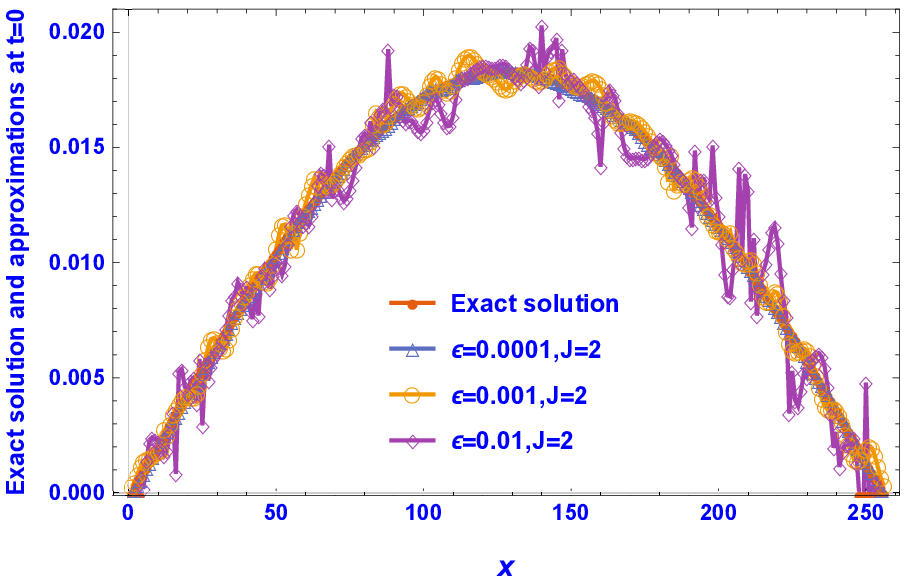} &\includegraphics[width=2.8in] {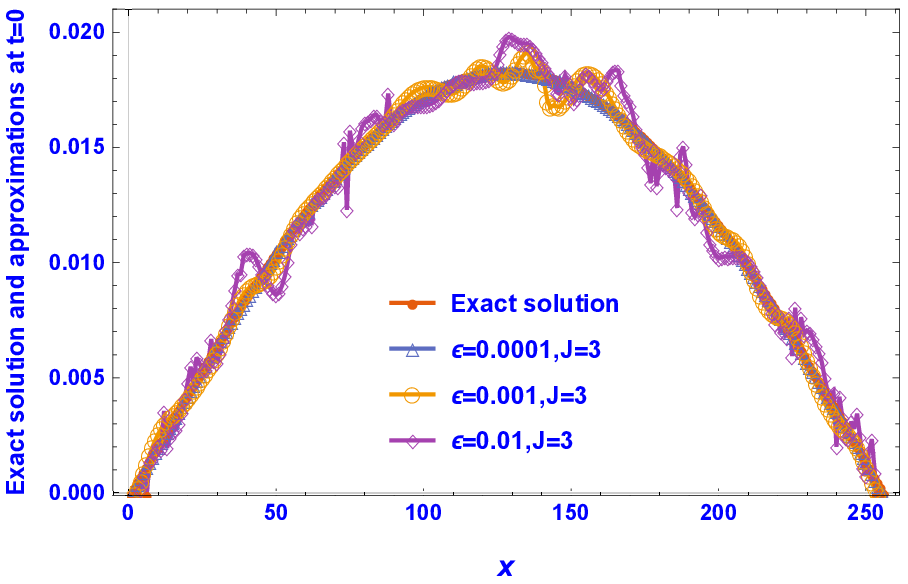}\\
  {\scriptsize \textrm{(\textit{a})~$p-q=100$\ \textrm{and}\ $\kappa(t)=2t+1$}} & {\scriptsize \textrm{(\textit{b})~$p-q=500 $\ \textrm{and}\ $\kappa(t)=2t+1$}}\\
 \includegraphics[width=2.8in] {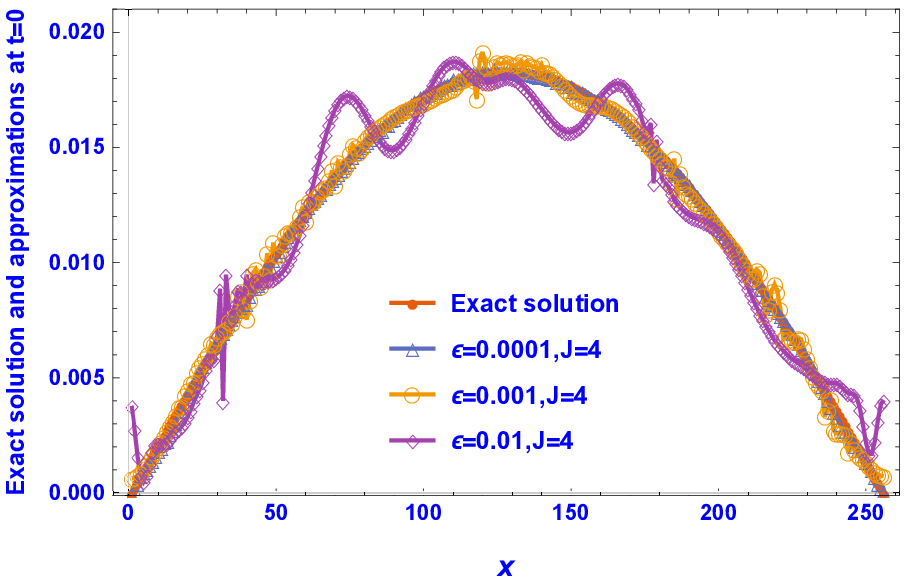} & \includegraphics[width=2.8in]{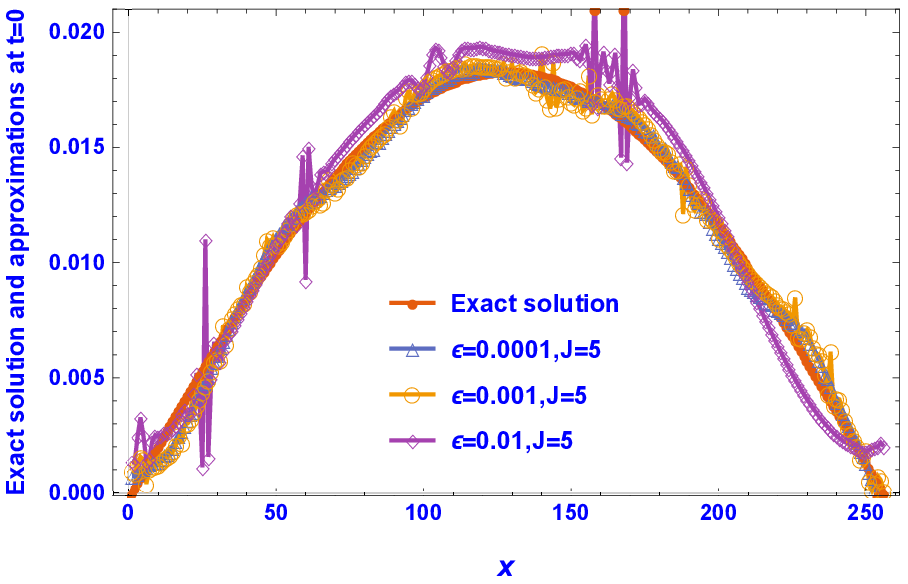}\\
 {\scriptsize \textrm{(\textit{c})~$p-q=1500$\ \textrm{and}\ $\kappa(t)=2t+1$}} & {\scriptsize \textrm{(\textit{d})~$p-q=3500$\ \textrm{and}\ $\kappa(t)=2t+1$}}\\
 \end{array}$
  \end{center}
\vspace{-.5cm}
{\caption{\label{fig:imag2}\small{For $\kappa(t)=2t+1$, the comparison of the approximate and exact solutions for Example \ref{exm0}  with $\epsilon= 10^{-2},10^{-3},10^{-4}$.}}}
\end{figure}
\begin{figure}[bt!]
\begin{center}
$ \begin{array}{cc}
   \includegraphics[width=2.9in] {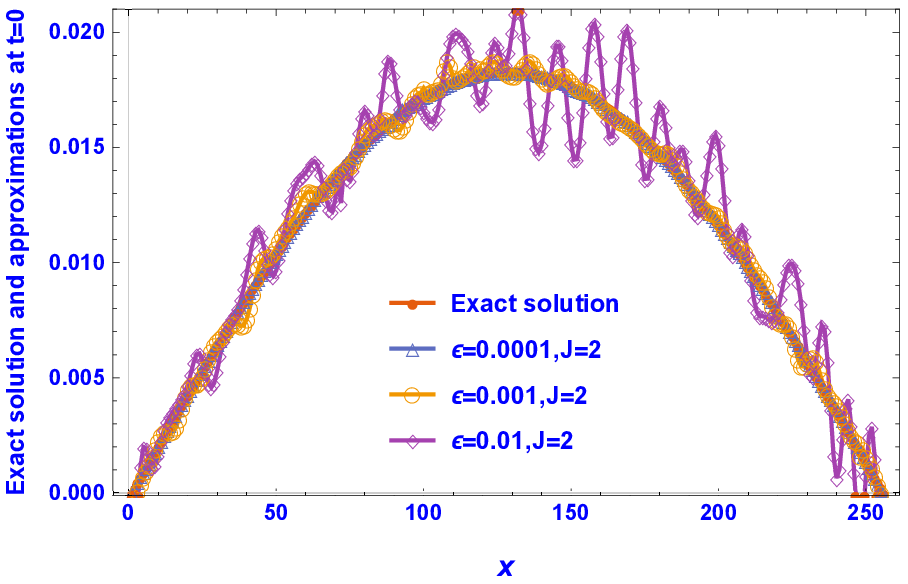} &\includegraphics[width=2.9in] {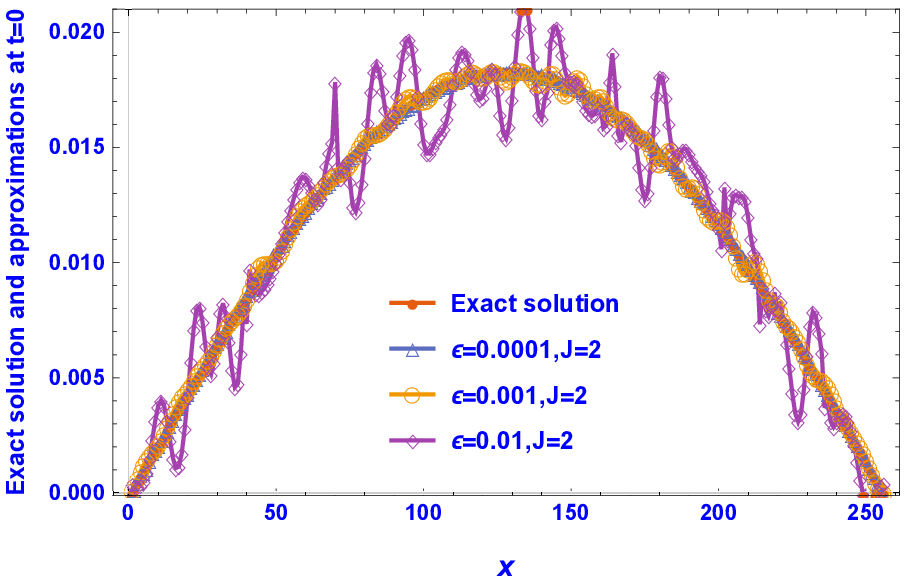}\\
  {\scriptsize \textrm{(\textit{a})~$p-q=3000$ \ \textrm{and}\ $\kappa(t)=200t+1$}} & {\scriptsize \textrm{(\textit{b})~$p-q=3000$ \ \textrm{and}\ $\kappa(t)=400t+1$}}
 \end{array}$
  \end{center}
\vspace{-.5cm}
{\caption{\label{fig:imag0}\small{The comparison of the approximate and exact solutions for Example \ref{exm0}   with $\epsilon= 10^{-2},10^{-3},10^{-4}$.}}}
\end{figure}
\begin{table}[bt!]\small{
\caption{\label{table1}\small{The absolute error  and the relative error of the proposed MWR method for the Example~\ref{exm0} defined by the Eq. \eqref{neweq:N1}, when $\kappa(t)=2 t+1$.}}\vspace{-0.2cm}
\begin{center}
\begin{tabular}{lccccccccccc}
  \hline
  Space  & \multicolumn{2}{c}{$\epsilon=0.1$} && \multicolumn{2}{c}{$\epsilon=0.01$} &&  \multicolumn{2}{c}{$\epsilon=0.001$} && \multicolumn{2}{c}{$\epsilon=0.0001$}\\
\cline{2-3} \cline{5-6} \cline{8-9} \cline{11-12}
  & Absolute & Relative && Absolute & Relative && Absolute & Relative && Absolute & Relative\\
 \hline
   $V_2$      & 0.0964 & 8.2231  && 0.0082 & 0.4948  && 0.0019 &  0.0758 && 2.10e-04   & 0.0091\\
   $V_3$      & 0.0862 & 2.7039  && 0.0020 & 0.2831  && 0.0010 &  0.0161 && 1.71e-04   & 0.0063  \\
   $V_4$      & 0.0987 & 3.3331  && 0.0090 & 0.3638  && 0.0018 &  0.0817 && 8.32e-04 & 0.0115  \\
   $V_5$      & 0.0990 & 7.1264  && 0.0096 & 0.4152  && 0.0021 &  0.1144  && 0.0011    & 0.0598  \\
   $V_6$      & 0.1716 & 9.1616  && 0.0068 & 0.3714  && 0.0029 &  0.1558  && 0.0016    & 0.0867  \\
  \hline
\end{tabular}\end{center}}
\end{table}
\begin{exm}\label{exm2}
The function defined by
 \begin{align*}
 u(x,t)=e^{-\vert x\vert}\big(\cosh(\mu_{t}(0))+\sinh(\mu_{t}(0))\big),
 \end{align*}
is non-differentiable at $x=0$. This function
is the closed form analytical solution of the following BHCP with the non-smooth data on region $\Omega=\lbrace (x,t)\vert 0\leq t\leq 1, \vert x\vert\leq10 \rbrace$
\begin{numcases}{\label{eq:N2}}
\label{eq:N2-0}
\displaystyle{\partial_{t}u(x,t)=\kappa(t)\partial_{xx}u(x,t)},\quad
(x,t)\in\Omega,\\\label{eq:N2-1} u(x,T)=e^{-\vert
x\vert}\big(\cosh(\mu_{T}(0))+\sinh(\mu_{T}(0))\big),
\end{numcases}
where $T=1$ and   $\kappa(t)=\frac{1}{100+\exp(t^{2})}$.
\end{exm}
\begin{figure}[bt!]
\begin{center}
$ \begin{array}{cc}
   \includegraphics[width=2.8in] {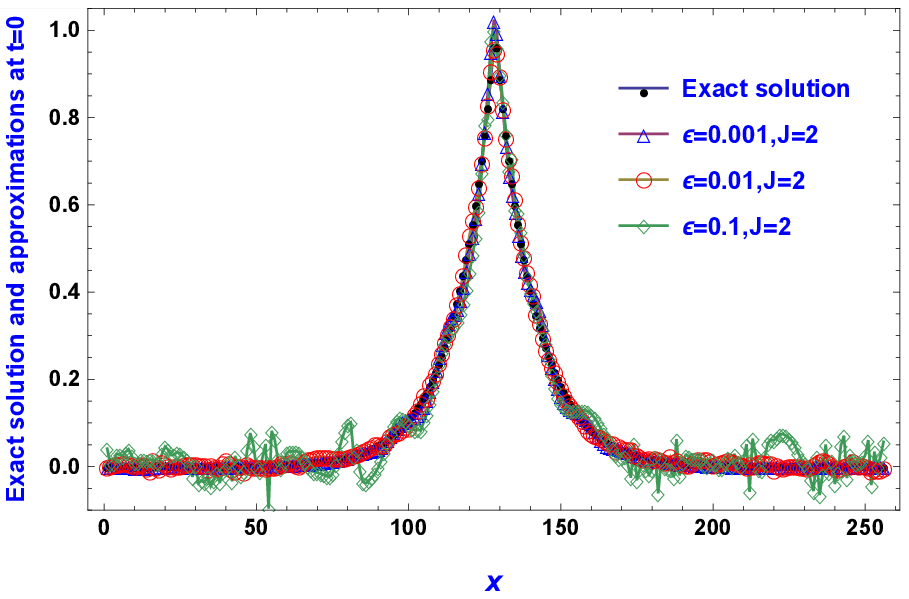} &\includegraphics[width=2.8in] {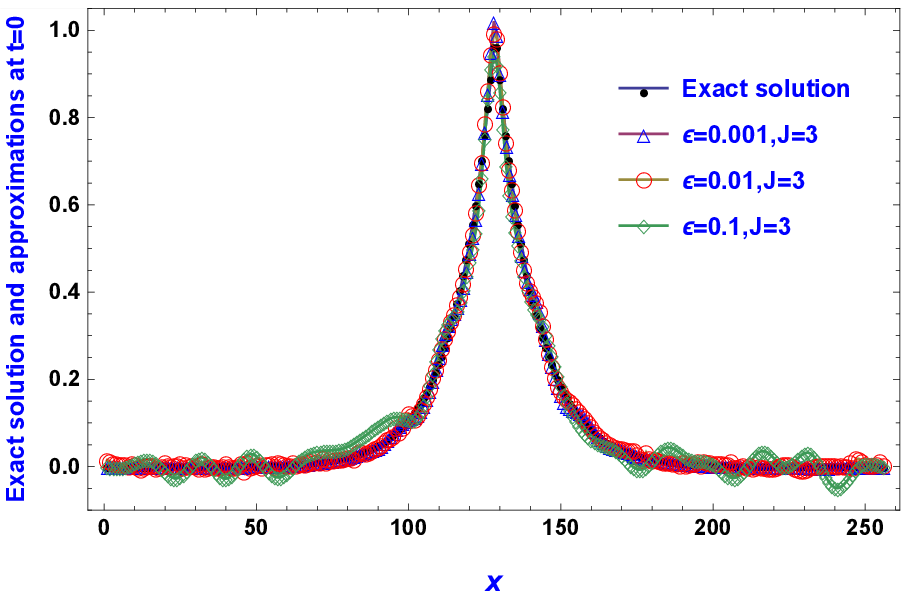}\\
  {\scriptsize \textrm{(\textit{a})~$p-q=2.2$}} & {\scriptsize \textrm{(\textit{b})~$p-q=2.5 $}}\\
 \includegraphics[width=2.8in] {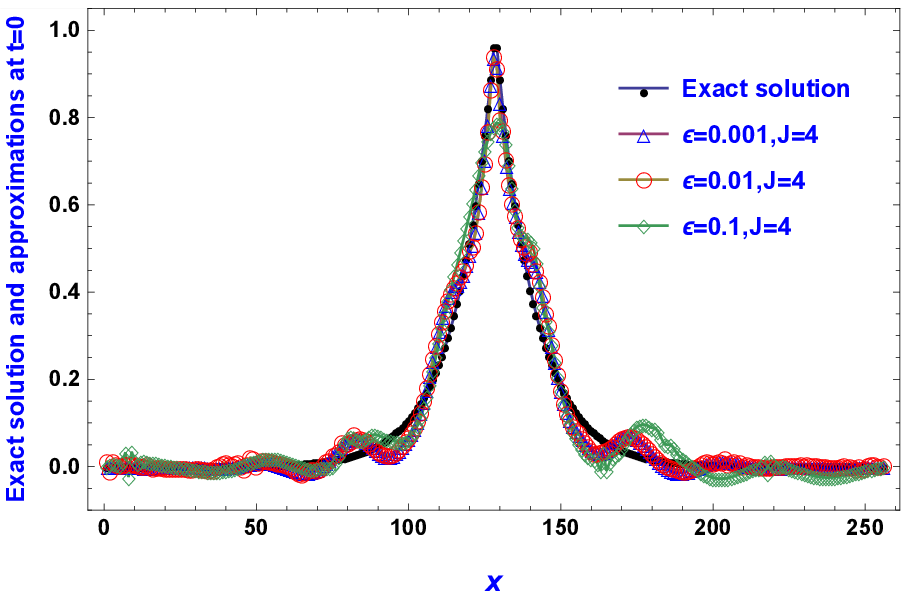} & \includegraphics[width=3.in]{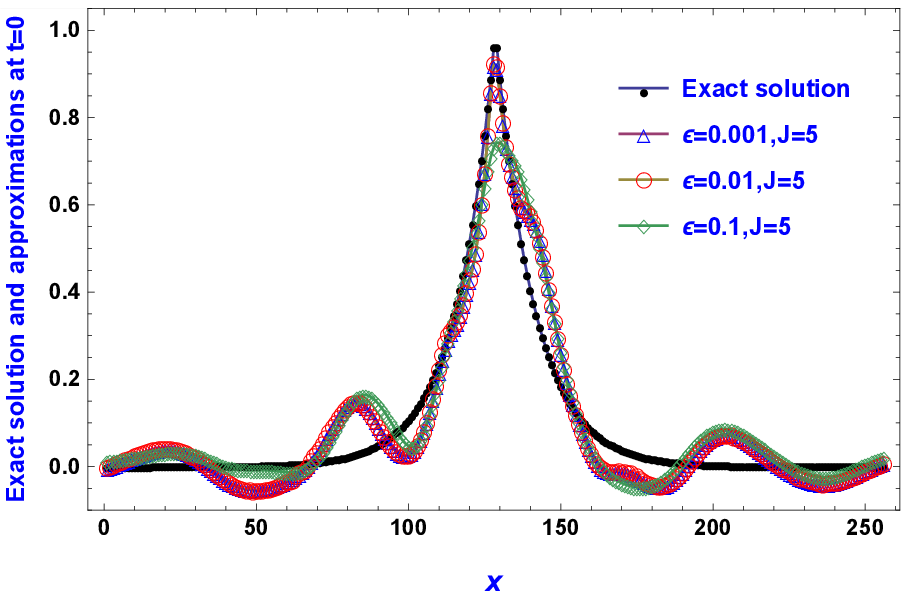}\\
{\scriptsize \textrm{(\textit{c})~$p-q=4$}} & {\scriptsize \textrm{(\textit{d})~$p-q=6$}}\\
 \end{array}$
  \end{center}
\vspace{-.5cm}
{\caption{\label{fig:imag3}\small{The comparison of the approximate and exact solutions for Example \eqref{exm2}, with different levels of noise $\epsilon$.}}}
\end{figure}
The difficulty of the Example \ref{exm2} is stemmed from that we attempt to
apply the non-smooth final data to retrieve a weak singular initial
data. Figure \ref{fig:imag3} (a)-(d) display the comparison between
the exact and reconstructed solutions from noisy data
$\varphi_{T,m}$. As it is shown that for several values of the
regularization parameter, the regularized solution in space $V_{3}$
is more accurate with respect to other regularized solutions for
each value of $\epsilon=10^{-1},10^{-2},10^{-3}$.
For  $\epsilon=10^{-3}$ in the spaces $V_{2}$ and $V_{3}$, the absolute and relative
errors to solve the one-dimensional case
of BHCP \eqref{eq:N2} are of order $10^{-2}$. These
figures show that  the cut-off frequency leads to retrieve imprecise
solution in spaces $V_{4}$ and $V_{5}$. Moreover, the presented
approximate simulations show that the regularization parameter
strategy  \eqref{eq:f4}  is successful.  The MWR approximate
solution is stable at $t=0$. This adopts with the theoretical
results as given by Remark \ref{remark1}.
 \begin{exm}
 Consider the  governing Eq. \eqref{eq:N2-0} of the Example \ref{exm2} on $\Omega=\lbrace (x,t): \vert x\vert\leq5, t\geq0 \rbrace$ with the
following initial Cauchy data:
 \begin{align}\label{eq:N3}
u(x,0)=\chi_{[-5,5]}(x),\qquad -5\leq x\leq 5,
\end{align}
where $\chi_{A}(\cdot)$ denotes the characteristic function of a set $A$. The closed form analytical solution is given by
\begin{align}
u(x,t)=\frac{1}{2}\Big(\textrm{erf}(\frac{x+5}{2\sqrt{\mu_{t}(0)}})-\textrm{erf}(\frac{x-5}{2\sqrt{\mu_{t}(0)}})\Big).
\end{align}
Consequently, this function is exact solution of the BHCP  governing the Eq. \eqref{eq:N2-0} and  the following final data
 \begin{align}\label{eq:N4}
u(x,T)=\frac{1}{2}\Big(\textrm{erf}(\frac{x+5}{2\sqrt{\mu_{T}(0)}})-\textrm{erf}(\frac{x-5}{2\sqrt{\mu_{T}(0)}})\Big).
\end{align}
\end{exm}
 \begin{figure}[bt!]
\begin{center}
$ \begin{array}{cc}
   \includegraphics[width=2.8in] {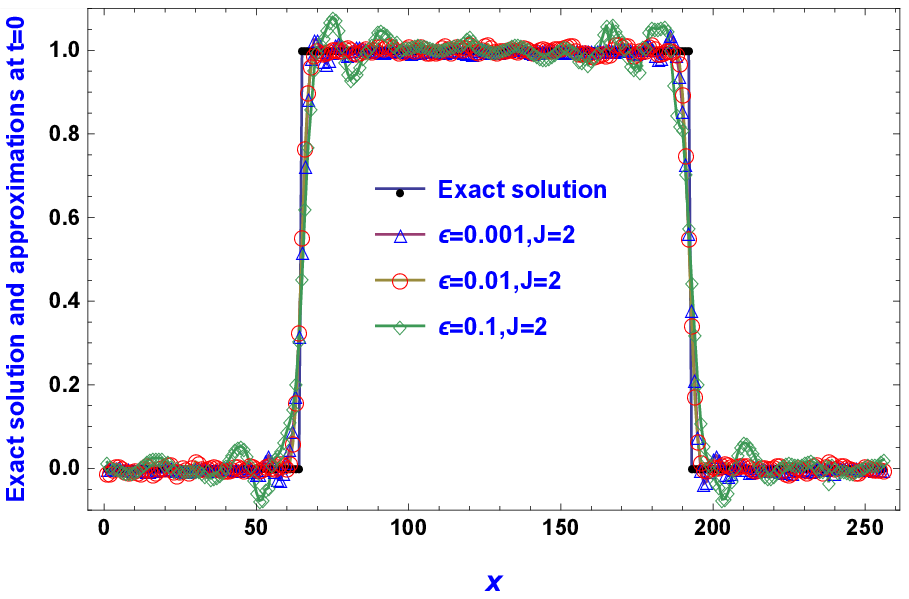} &\includegraphics[width=2.8in] {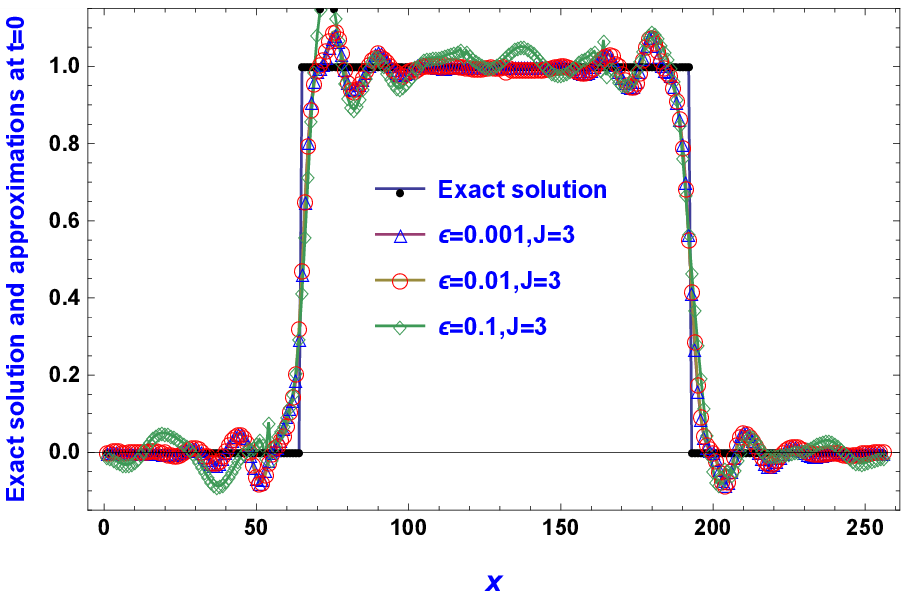}\\
  {\scriptsize \textrm{(\textit{a})~$p-q=2.25$}} & {\scriptsize \textrm{(\textit{b})~$p-q=2.42 $}}\\
 \includegraphics[width=2.8in] {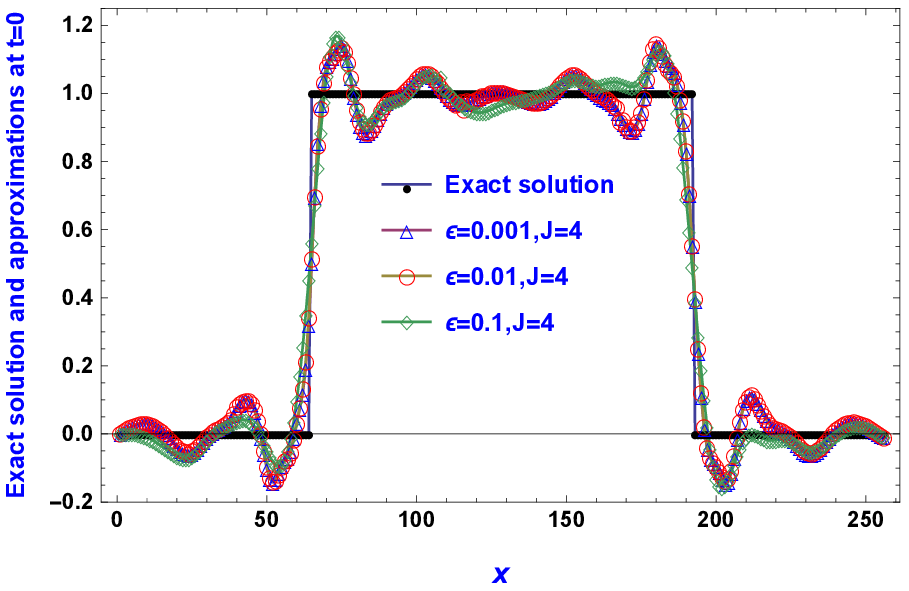} & \includegraphics[width=2.8in]{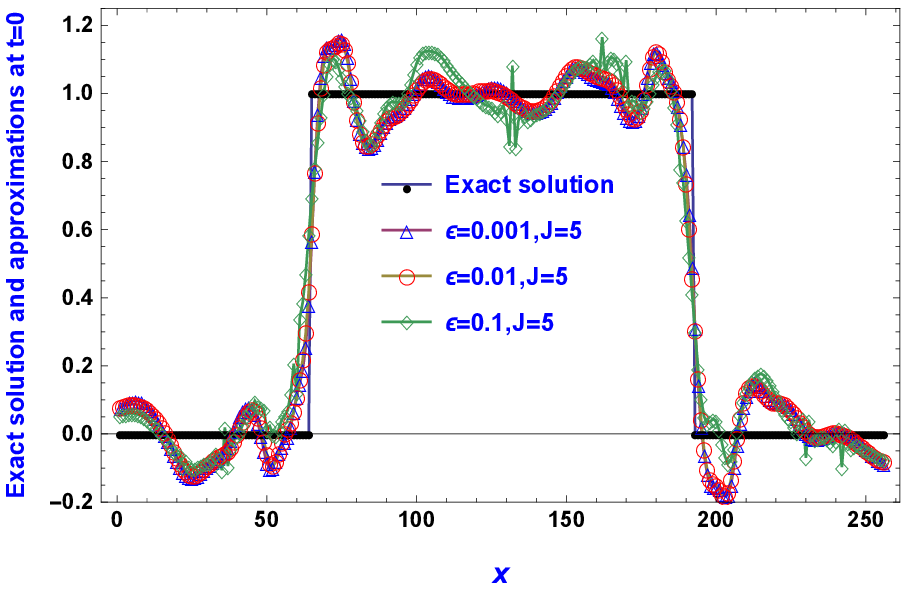}\\
{\scriptsize \textrm{(\textit{c})~$p-q=4.10$}} & {\scriptsize \textrm{(\textit{d})~$p-q=6.0$}}\\
 \end{array}$
  \end{center}
\vspace{-.5cm}
{\caption{\label{fig:imag4}\small{The comparison of the approximate and exact solutions for Example \eqref{exm2} at $t=0.01$, with different levels of noise $\epsilon$.}}}
\end{figure}
The complexity of the problem  is stemmed from that we attempt to
apply the smooth final data to reconstruct a non-smooth initial data.
The comparison between the exact and the regularization solutions
are depicted by Figure \ref{fig:imag4} (a)-(d), when three different
levels of noise  $\epsilon$ are added into final data. This figure
shows that the computational effect for $\vert x\vert\leq5$ is still
rather satisfactory. We can also observe  that for several values of
the regularization parameter, the regularized solutions in space
$V_{2}$ are more precise with respect to other
regularized solutions.  For $\epsilon=10^{-2}$ in the space $V_2$, the absolute and relative errors to solve the BHCP with discontinuous solution are of order $10^{-2}$. The computational
results are in good agreement with the analytical solution.
\subsection{Two dimensional examples}
Here, the proposed MWR technique is applied for two different two-dimensional problems. The computational domain  is divided into $N=2^{8}\times 2^{8}$ cells.
\begin{exm}\label{exm3}
Consider the $n$-dimensional BHCP with smooth data defined by
\begin{numcases}{\label{eq:N5}}
\label{eq:N5-0}
\displaystyle \partial_{t}u(\mathbf{x},t)=\kappa(t)\nabla^{2} u(\mathbf{x},t),  \quad (\mathbf{x},t)\in\Bbb R^{n}\times [0,T),\\\label{eq:N5-1}
u(\mathbf{x},T)=\frac{1}{\sqrt{1+4\mu_{T}(0)}}\exp(-\frac{\Vert \mathbf{x}\Vert^{2}}{1+4\mu_{T}(0)}), \qquad \mathbf{x}\in\Bbb R^{n},
\end{numcases}
where $\kappa(t)=\frac{1}{100+\exp(t^{2})}$. The closed analytical form solution of the BHCP \eqref{eq:N5} is
 \begin{align*}
 u(x,t)=\frac{1}{\sqrt{1+4\mu_{t}(0)}}\exp\Big(-\frac{\Vert \mathbf{x}\Vert^{2}}{1+4\mu_{t}(0)}\Big),\quad \mathbf{x}=(x_{1},\cdots, x_{n}).
 \end{align*}
\end{exm}
The two-dimensional case of the BHCP defined by \eqref{eq:N5} is implemented by
the proposed MWR method on the region $\Omega=\lbrace (\mathbf{x},t)\vert\ 0\leq t\leq 1, \mathbf{x}\in[-10,10]^2 \rbrace$, where $\mathbf{x}=(x,y)$. The numerical simulations of the proposed method for two-dimensional BHCP \eqref{eq:N5} are depicted by Figure \ref{fig:imag5} (a)-(f). This figure illustrates the comparison between the regularized solutions and the absolute errors, where the regularization parameter $J^{\ast}=2,4,6$ are used.
\begin{figure}[bt!]
\begin{center}
$ \begin{array}{cc}
   \includegraphics[width=2.7in] {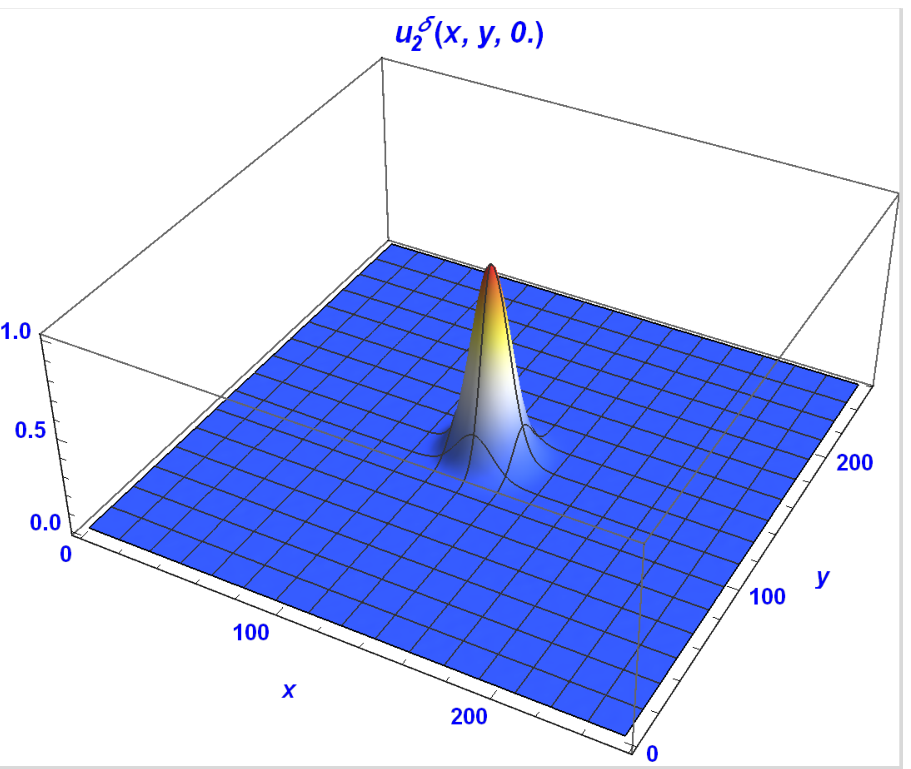} &\includegraphics[width=2.7in] {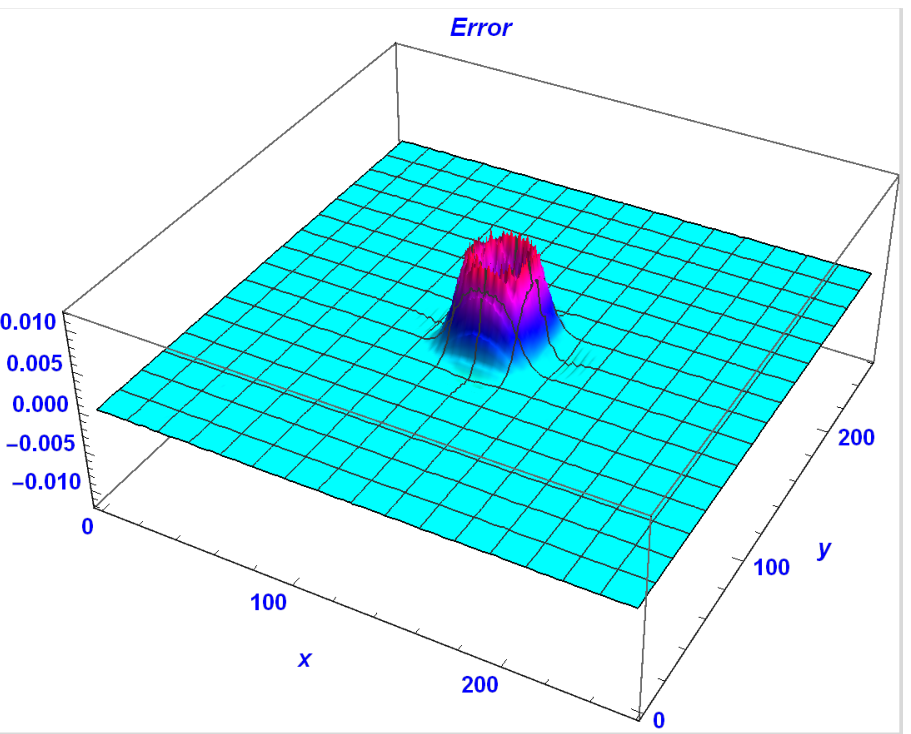}\\
  {\scriptsize \textrm{(\textit{a})~$J^{\ast}=2$}} & {\scriptsize \textrm{(\textit{b})~\textrm{Absolute\ error}}}\\
 \includegraphics[width=2.7in] {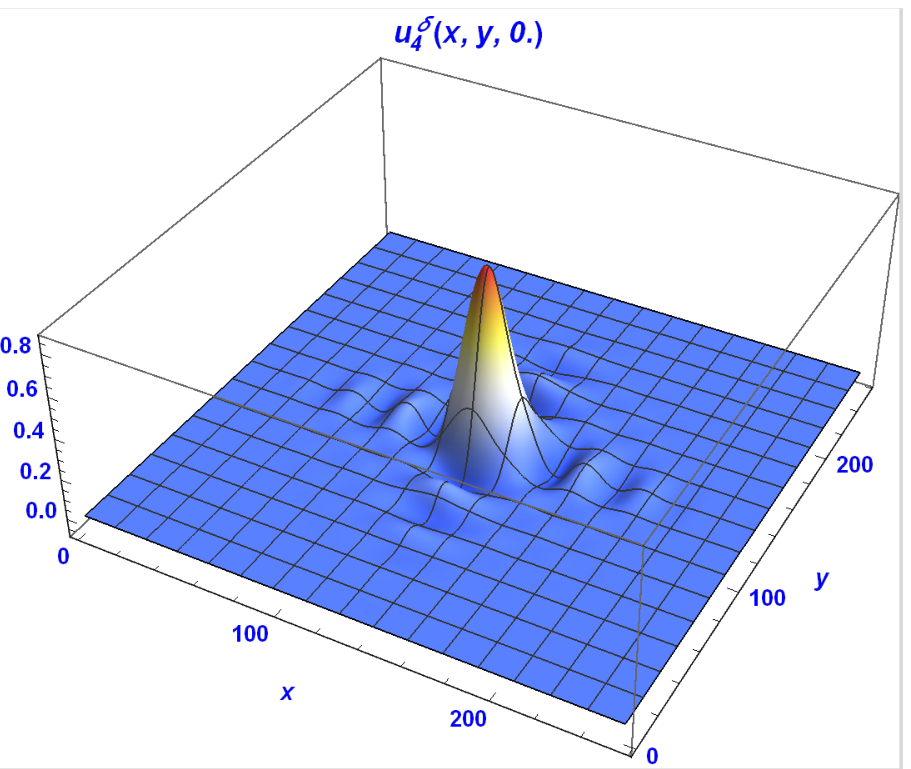} & \includegraphics[width=2.7in]{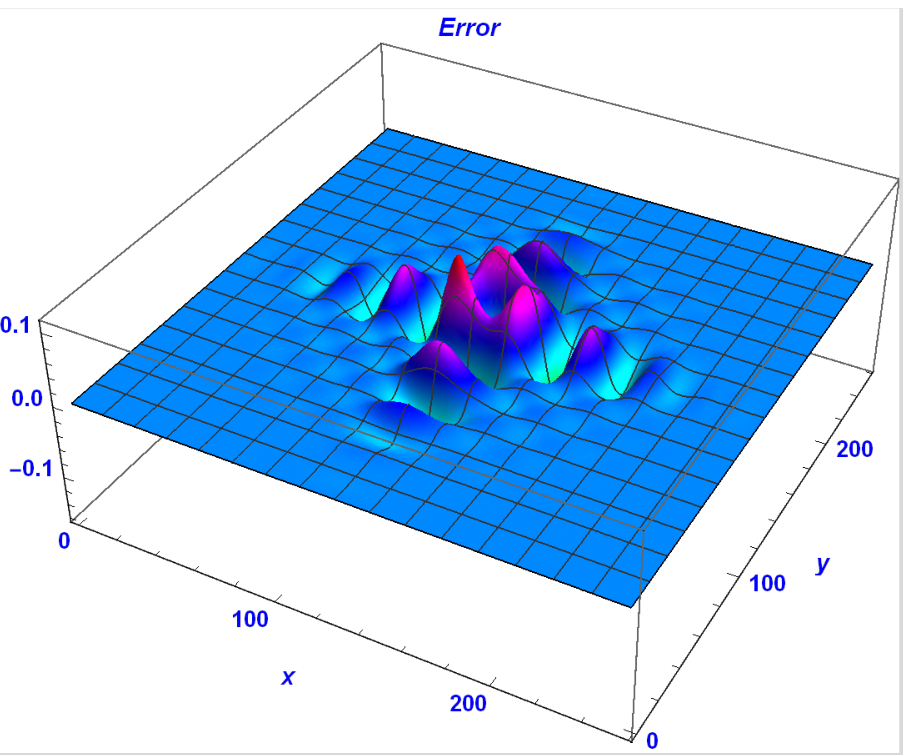}\\
 {\scriptsize \textrm{(\textit{c})~$J^{\ast}=4$}} & {\scriptsize \textrm{(\textit{d})~ \textrm{Absolute\ error}}}\\
  \includegraphics[width=2.7in] {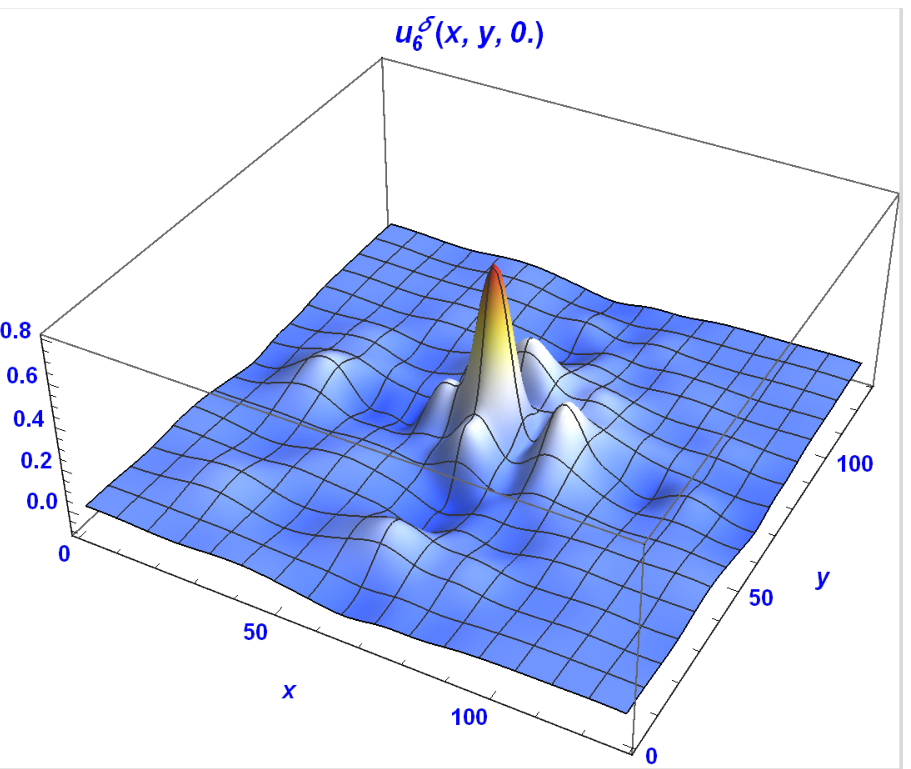} & \includegraphics[width=2.7in] {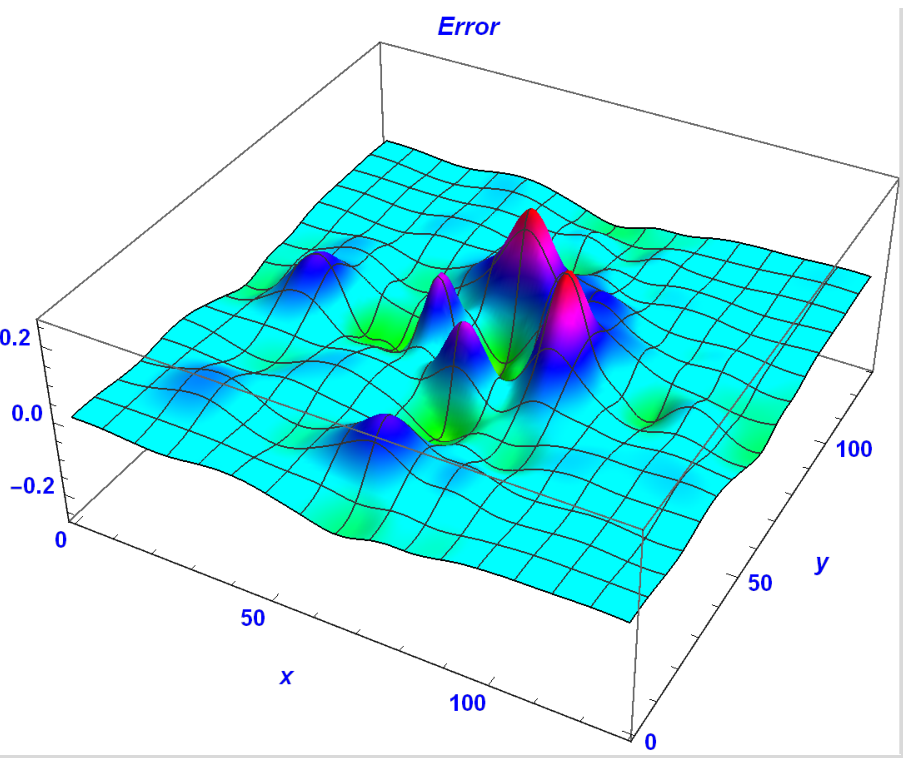}\\
 {\scriptsize \textrm{(\textit{e})~$J^{\ast}=6$}} & {\scriptsize \textrm{(\textit{f})~\textrm{Absolute\ error}}}\\
 \end{array}$
  \end{center}
\vspace{-.5cm}
{\caption{\label{fig:imag5}\small{The approximate solutions and the absolute errors   in different scale  spaces for the Example \eqref{exm3}.}}}
\end{figure}
 We observe that in spaces $V_{4},V_{6}$ the approximate solutions are imprecise. It may be the noise in  $\varphi_{T,m}$ is not damped enough by projections $\mathcal{P}_{4},\mathcal{P}_{6}$, and hence the high frequencies of $\widehat{\varphi}_{T,m}$ is so extremely magnified that they destroy the approximated solution. Therefore, the regularization parameter $J^{\ast}=2$ is the optimal choice.
 For the level of noise $\epsilon=10^{-3}$, the absolute and relative errors of the proposed MWR method to solve the two-dimensional case of  BHCP \eqref{eq:N5} in space $V_2$ are of order $10^{-2}$.
\begin{exm}\label{exm4}
Consider the $n$-dimensional  governing Eq. \eqref{eq:N5-0}  with the following final data
\begin{align}\label{eq:N6}
u(
\mathbf{x},T)=\exp(-\Vert\mathbf{x}\Vert_{1})\big(\cosh(n\mu_{T}(0))+\sinh(n\mu_{T}(0))\big),
\quad  \mathbf{x}=(x_{1},\cdots, x_{n}).
\end{align}
 Then, the function
 \begin{align*}
 u(\mathbf{x},t)=\exp(-\Vert\mathbf{x}\Vert_{1})\big(\cosh(n\mu_{t}(0))+\sinh(n\mu_{t}(0))\big),
 \end{align*}
is the exact unique solution of the problem described by Eqs. \eqref{eq:N5-0} and \eqref{eq:N6}, where  ${\Vert \mathbf{x}\Vert_{1}=\sum_{r=1}^{n}\vert x_{r}\vert}$.
\end{exm}
As it is seen that the final data has no derivative at the region. For two-dimensional case,
 Figure \ref{fig:imag5} illustrates the comparisons between the exact solution and its regularized solution defined by the regularization parameter $J^{\ast}=2,3,4$ for a noise of variance $10^{-3}$. We can see that the larger regularization parameter is, the less accurate of the regularization solution is. So the computational solution in the spaces $V_{4}$ is poor, it may be the perturbation in the function $\varphi_{T,m}$  is not reduced enough by projections $\mathcal{P}_{4}$, and thus the high frequencies of $\widehat{\varphi}_{T,m}$ is so severely magnified that they destruct the approximated solution. In this example the regularization parameters $J^{\ast}=2$ and $J^{\ast}=3$ are good optimal choices. For the level of noise $\epsilon=10^{-3}$, the absolute and relative errors of the  MWR method for solving the two-dimensional BHCP given by Example \eqref{exm4} in space    $V_3$ is of order $10^{-2}$.
\begin{figure}[bt!]
\begin{center}
$ \begin{array}{cc}
   \includegraphics[width=2.7in] {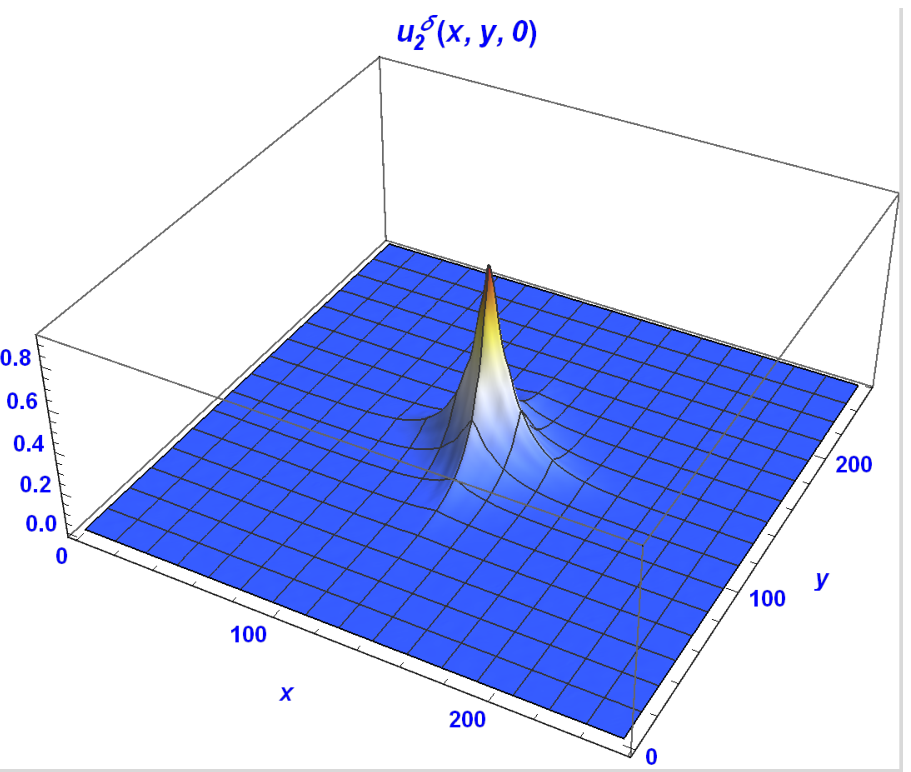} &\includegraphics[width=2.7in] {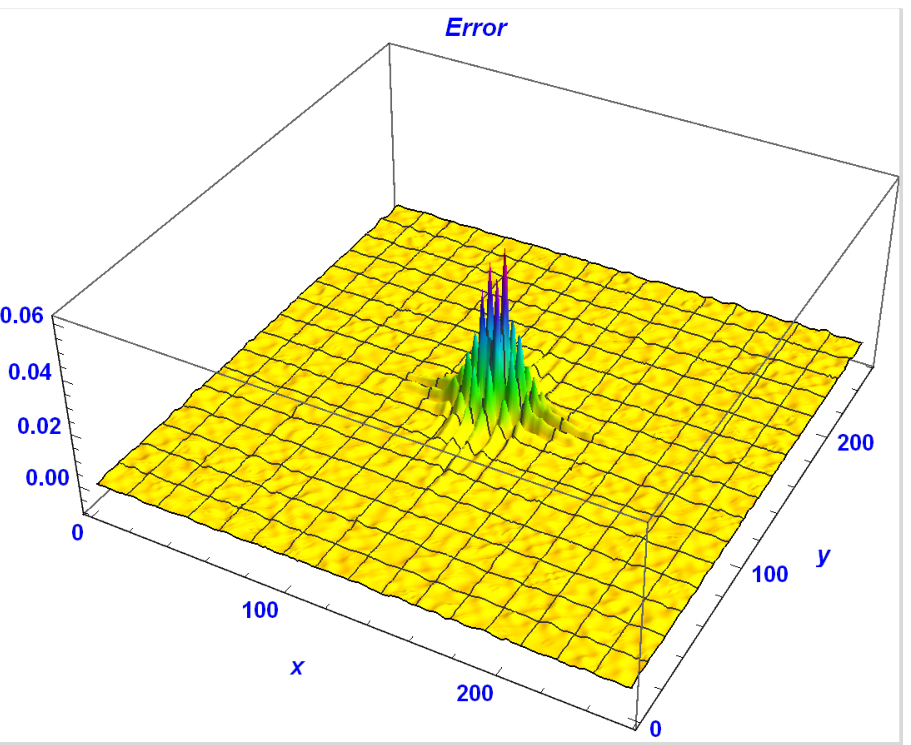}\\
  {\scriptsize \textrm{(\textit{a})~$ p-q=1.5$}} & {\scriptsize \textrm{(\textit{b})~\textrm{Absolute\ error}}}\\
 \includegraphics[width=2.7in] {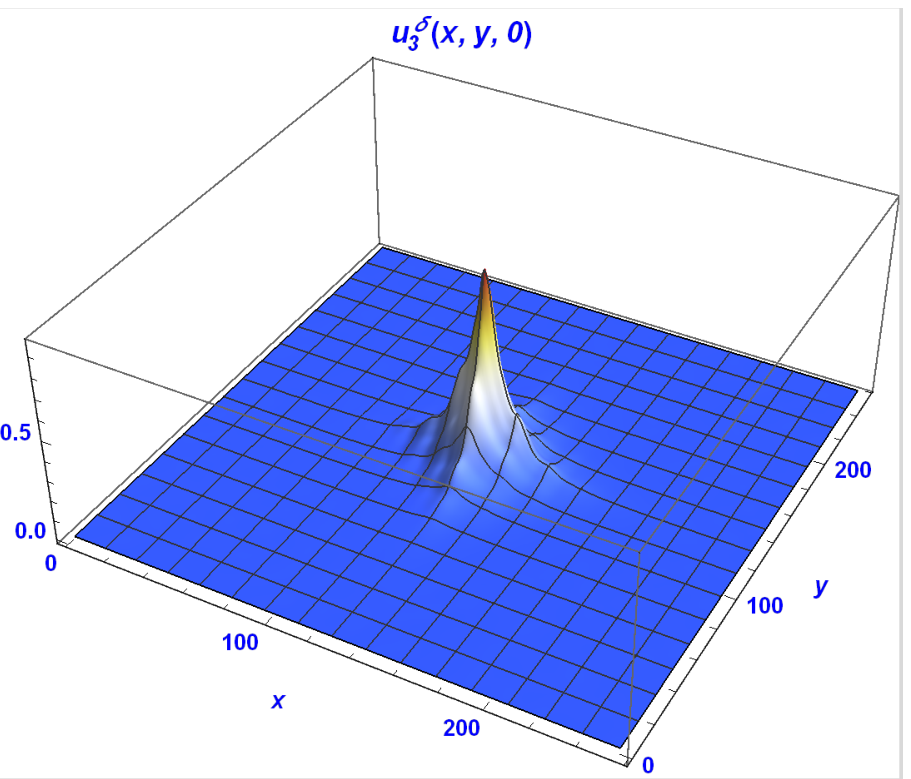} & \includegraphics[width=2.7in]{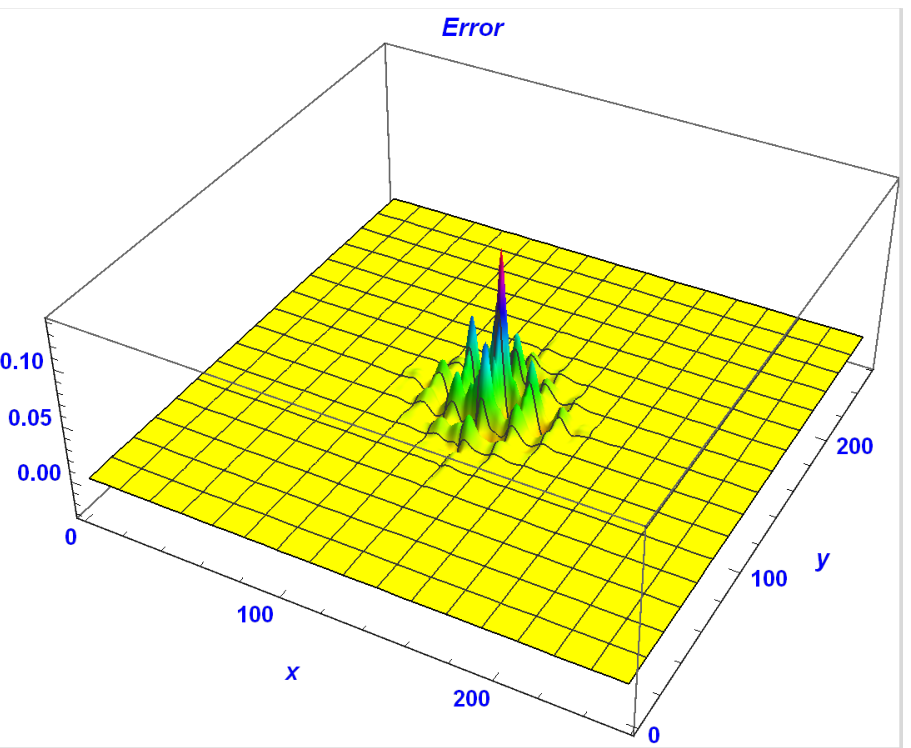}\\
 {\scriptsize \textrm{(\textit{c})~$ p-q=1.8$}} & {\scriptsize \textrm{(\textit{d})~\textrm{Absolute\ error}}}\\
  \includegraphics[width=2.7in] {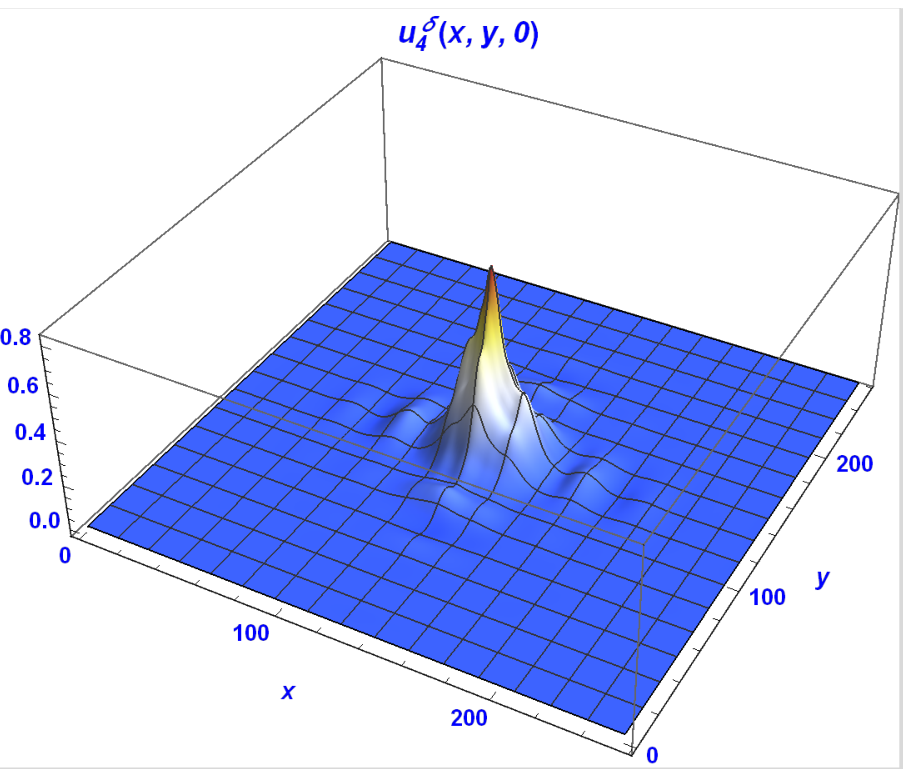} & \includegraphics[width=2.7in] {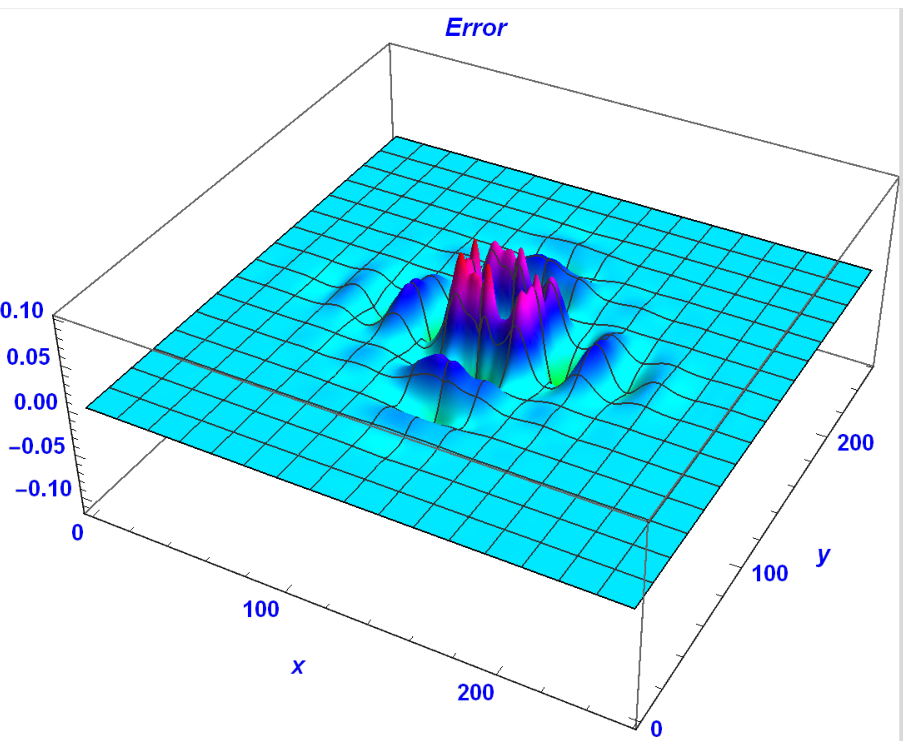}\\
 {\scriptsize \textrm{(\textit{e})~$ p-q=2$}} & {\scriptsize \textrm{(\textit{f})~ \textrm{Absolute\ error}}}\\
 \end{array}$
  \end{center}
\vspace{-.5cm}
{\caption{\label{fig:imag6}\small{The approximate solutions  and the absolute errors  in different scale  spaces for the Example \eqref{exm4}.}}}
\end{figure}
 \section{Conclusion}
Inverse and ill-posed problems particularly in the area of the partial differential equations, have nominated the attention
 of many researchers due to the extremely sensitive dependence on the terminal and initial data. In this paper, the high-dimensional
 BHCP with time-dependent thermal diffusivity factor in an infinite ``strip" domain is studied.
 The main characteristic of this problem is its ill-posedness. That is to say, independence of solution on terminal data. This work incorporates
 two viewpoints: Firstly, in viewpoint of theoretical analysis, we have presented a new approach for a regularization scheme based on Meyer
 wavelet theory. According to this analyze, we have gained an optimal explicit error estimate of the H\"{o}lder and Logarithmic types
  as well as the regularization parameter choice criteria  under \textit{a-priori} bound assumption. According to the optimal error bound,
  one can judge whether regularization technique is ok or not. Also, some precise stable estimates between the exact solution and its approximations
  are provided.
  About the convergence rate of the error estimate, for $p-q>0$, the proposed technique demonstrates that the convergence speed of the
  regularization solution in the error estimate of Logarithmic type, is faster than the error estimate of the H\"{o}lder type
  which is one of the most important advantageous with respect to the H\"{o}lder type.
  Secondly, in the viewpoint of computational analysis, we have experimented some kinds of prototype smooth and non-smooth examples
  in one and two dimensional spaces. For instance, in Example \ref{exm0}, our method have been compared to another method in \cite{Tuan}.
  Numerical simulations show that the MWR method is more accurate than the method given by \cite{Tuan}. Consequently, from these illustrated examples,
  it can be concluded that the proposed technique is efficient and accurate to estimate the exact solution of the BHCP.
  Also, we believed that the proposed method is extendable to solve the broadest spectrum
   of the inverse and ill-posed parabolic partial differential
   equations.


\end{document}